\newtheorem{theorem}{Theorem}
\newtheorem{lemma}[theorem]{Lemma}
\newtheorem{conjecture}[theorem]{Conjecture}
\newtheorem{proposition}[theorem]{Proposition}
\newtheorem{claim}[theorem]{Claim}
\newtheorem{problem}[theorem]{Problem}
\numberwithin{theorem}{section}
\numberwithin{equation}{section}
\newenvironment{cproof}
{\begin{proof}
 [Proof.]
 \vspace{-1.5\parsep}%-3.2\parsep} %%%For use with US letter
}
{ \end{proof}}
\newcommand{\MM}{\overline{M}}
\newcommand{\Aa}{\overline{A}}
\newcommand{\ff}{\gamma}
\newcommand{\HH}{\overline{H}}
\newcommand{\CC}{\mathcal{C}}
\begin{document}

\title{
Forcing clique immersions through chromatic number
\thanks{This work supported by the European Research Council under the European Union's Seventh Framework Programme (FP7/2007-2013)/ERC Grant Agreement no. 279558.}}
%\thanks{An extended abstract for JMDA 2016 appeared in Electronic Notes in Discrete Mathematics, 54 (2016), 121--126. } }

\author{
Gregory Gauthier
\thanks{ Princeton University, Princeton, NJ, USA, \texttt{gjg2@math.princeton.edu}}, 
Tien-Nam Le
\thanks{Laboratoire d'Informatique du Parall\'elisme,
\'Ecole Normale Sup\'erieure de Lyon, France, \texttt{tien-nam.le@ens-lyon.fr}}, \\ and 
Paul Wollan
\thanks{Department of Computer Science, University of Rome, ``La Sapienza'',
Rome, Italy, \texttt{wollan@di.uniroma1.it}}}

\date{}

\maketitle

\begin{abstract}
Building on recent work of Dvo\v{r}\'ak and Yepremyan, we show that every simple graph of minimum degree $7t+7$ contains $K_t$ as an immersion and that every graph with chromatic number at least $3.54t + 4$ contains $K_t$ as an immersion. We also show that every graph on $n$ vertices with no stable set of size three contains $K_{2\lfloor n/5 \rfloor}$ as an immersion.
\end{abstract}

Keywords: Graph immersion, Hadwiger conjecture, chromatic number.

%%%%%%%%%%%%%%%%%%%%%%%%%%%%%%%%%%%%%%
%%%%%%%%%%%%%%%%%%%%%%%%%%%%%%%%%%%%%%
\section{Introduction} \label{section:introduction}
%%%%%%%%%%%%%%%%%%%%%%%%%%%%%%%%%%%%%%
%%%%%%%%%%%%%%%%%%%%%%%%%%%%%%%%%%%%%%

\subsection{Hadwiger's conjecture}
The graphs in this paper are simple and finite, while multigraphs may have loops and multiple edges.
A fundamental question in graph theory is the relationship between the chromatic number of a graph $G$ and the presence of certain structures in $G$.  One of the most well-known specific example of this type of question is the Four Color Theorem, which states that every planar graph is 4-colorable.
Hadwiger \cite{Had} in 1943 proposed a far-reaching generalization of the Four Color Theorem, which asserts that for all positive integers $t$, every graph of chromatic number $t$ contains $K_t$, the clique on $t$ vertices, as a minor. In 1937, Wagner \cite{wa} proved that the Hadwiger's conjecture for $t=5$ is equivalent to the Four Color Theorem. Robertson, Seymour, and Thomas \cite{RST} settle the conjecture for $t=6$, while the conjecture is still open for $t\ge 7$.
On the other hand, it was independently proved in 1984 by Kostochka and Thomasson \cite{Ko,Tho} that a graph without a $K_t$-minor is $O(k\sqrt{\log k})$-colorable for every $k\ge 1$, and there has been no improvement in the order $k\sqrt{\log k}$ since then. 

For graphs with no stable set of size three (i.e. there do not exist three vertices, all pairwise nonadjacent), Duchet and Meyniel \cite{DM} proposed an analogous conjecture to the Hadwiger's conjecture that every graph with $n$ vertices and no stable set of size three contains a $K_{\lceil n/2\rceil}$-minor and proved that such graphs contain $K_{\lceil n/3\rceil}$ as a minor, which remains the best bound to date. Plumber, Stiebitz, and Toft \cite{PST} showed that the conjecture of Duchet and Meyniel is indeed equivalent to the Hadwiger's conjecture for graphs with no stable set of size three. 

\subsection{Graph immersion}

In this paper, we focus on the immersion relation on graphs, which is a variant of minor relation (see \cite{RS}). 
We follow the definitions in \cite{Wo}.
Given loopless multigraphs $G,H$, we say that $G$ admits an \emph{immersion} of $H$ if there exists functions $\pi_1:V(H)\to V(G)$ and $\pi_2$ mapping the edges of $H$ to paths of $G$ satisfying the following:
\begin{itemize}
\item the map $\pi_1$ is an injection;
\item for every edge $e\in E(H)$ with endpoints $x$ and $y$, $\pi_2(e)$ is a path with endpoints equal to $\pi_1(x)$ and $\pi_2(y)$; and
\item for edges $e,e'\in E(H)$, $e\ne e'$, $\pi_2(e)$ and $\pi_2(e')$ have no edge in common.
\end{itemize}
We say that $G$ admits a \emph{strong immersion} of $H$ if the following condition holds as well.
\begin{itemize}
\item For every edge $e\in E(H)$ with endpoints $x$ and $y$, the path $\pi_2(e)$ intersects the set $\pi_1(V(H))$ only in its endpoints.
\end{itemize}
The vertices $\{\pi_1(x) : x \in V (H)\}$ are the \emph{branch vertices} of the immersion. We will
also say that $G$ (strongly) immerses $H$ or alternatively that $G$ contains $H$ as a (strong) immersion.

We can alternately define immersions as follows. Let $e_1$ and $e_2$ be distinct edges in $G$ such that the endpoints of $e_1$ are $x, y$ and the endpoints of $e_2$ are $y, z$. To \emph{split off} the edges $e_1$ and $e_2$, we delete the edges $e_1$ and $e_2$ from $G$ and add
a new edge $e$ with endpoints $x$ and $z$ (note
that this might result in a multi-edge or a loop). Then $G$ contains $H$ as an immersion if and
only if $H$ can be obtained from a subgraph of $G$ by repeatedly splitting off pairs of edges and deleting isolated vertices.

We consider a variant of Hadwiger's conjecture to {graph immersions} due to Lescure and Meynial \cite{LM} in 1989 and, independently, to Abu-Khzam and Langston \cite{AL} in 2003. 
The conjecture explicitly states the following.
\begin{conjecture}[\cite{AL}, \cite{LM}]\label{conj:main}
For every positive integer $t$, every graph with no $K_t$ immersion is properly colorable with at most $t-1$ colors.
\end{conjecture}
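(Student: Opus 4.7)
The plan is to reduce the chromatic hypothesis to a minimum-degree hypothesis and then produce the immersion by routing edge-disjoint paths. The conjecture itself is open, so what follows is a natural strategy rather than a claim to a complete argument; it matches the broad outline the paper pursues for its partial results.

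First, I would pass from chromatic number to minimum degree: if $\chi(G) \ge t$, choose a subgraph $H \subseteq G$ that is vertex-minimal subject to $\chi(H) \ge t$; every vertex of $H$ has degree at least $t-1$ in $H$, and any $K_t$-immersion in $H$ is also one in $G$. Hence it suffices to prove the degree version of the conjecture: every graph with $\delta(G) \ge t-1$ contains $K_t$ as an immersion.

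Second, for the immersion I would work with a vertex-minimum counterexample $G$ to this degree version and argue it must be highly edge-connected. If $G$ admitted an edge-cut of size less than $t-1$, splitting off pairs of cut edges on the smaller side — using the classical Lov\'asz/Mader splitting-off lemmas, which preserve minimum degree and edge-connectivity — would shrink $|V(G)|$ while preserving both $\delta \ge t-1$ and the absence of a $K_t$-immersion, contradicting minimality. With $(t-1)$-edge-connectivity in hand, the edge version of Menger's theorem gives, between any two vertices, $t-1$ edge-disjoint paths, which is exactly the number needed at each branch vertex of a $K_t$-immersion. The remaining goal is then to select $t$ branch vertices and produce all $\binom{t}{2}$ edge-disjoint paths between them simultaneously, plausibly by induction on $t$: find one such path, delete its edges, and argue that what remains is still rich enough to continue.

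The main obstacle is exactly this last step. The naive ``find a path, delete its edges, repeat'' loses edge-connectivity far too fast, and keeping enough structure after each deletion seems to demand the starting minimum degree to be substantially larger than $t-1$. This is the source of the constant-factor slack in the best currently available results — $\delta(G) \ge 7t+7$, or $\chi(G) \ge 3.54t + 4$ — which is what existing splitting-off and routing techniques can actually deliver. Closing the gap down to $t-1$ in the conjecture appears to require a genuinely new ingredient: perhaps an extremal density-type bound analogous to Kostochka--Thomason for minors but tailored to edge-disjoint path systems, or a routing scheme that reuses edges already committed to earlier paths rather than discarding them when moving on to the next pair of branch vertices.
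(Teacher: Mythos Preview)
The statement is presented in the paper as an open conjecture; the paper does not prove it, and you correctly say so. There is therefore no proof in the paper to compare your attempt against.

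Your first reduction --- pass to a vertex-critical subgraph to force minimum degree at least $t-1$ --- is the standard move, and the paper uses exactly this observation (see the discussion immediately after the conjecture). Where your sketch diverges from what the paper actually does for its partial results is the second step. You propose arguing that a minimal counterexample is $(t-1)$-edge-connected via Lov\'asz/Mader splitting-off and then routing the $\binom{t}{2}$ paths by Menger. The paper's route to Theorem~\ref{theorem:mindeg} is different: it passes to a $7t$-deficient Eulerian graph, repeatedly \emph{suppresses} vertices whose neighborhoods admit a perfect matching in the complement, and when suppression is blocked it extracts a dense subgraph to which a ``missing degree'' lemma (Lemma~\ref{lemma:average1}) applies --- finding the required paths of length two and four explicitly rather than via an edge-connectivity/Menger argument. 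For Theorem~\ref{theorem:chromatic} the paper does not route everything through minimum degree at all: it works directly with Kempe chains in a critical coloring to build a dense graph on a set of singleton/doubleton color representatives, and then applies Lemma~\ref{lemma:average2}. This second idea is precisely what pushes the chromatic constant ($3.54$) below the minimum-degree constant ($7$), a gain your outline cannot see because it immediately trades chromatic number for minimum degree. Your diagnosis of the obstacle --- that deleting used edges degrades connectivity too fast and that something that ``reuses'' structure is needed --- is accurate, and the paper's dense-graph lemmas are one concrete instance of such reuse, though still far from closing the gap to $t-1$.
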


Conjecture \ref{conj:main} is trivial for $t \le 4$, and was independently proved by
Lescure and Meyniel \cite{LM} and DeVos et al. \cite{DKMO10} for $5 \le  t \le 7$.
One can immediately show that a minimum counterexample to Conjecture \ref{conj:main} has minimum degree $t-1$.  Thus, the conjecture provides additional motivation for the natural question of what is the smallest minimum degree necessary to force a clique immersion.  DeVos et al. \cite{mohar} showed that minimum degree $200t$ suffices to force a $K_t$ immersion in a simple graph.  This implies that every graph without a $K_t$-immersion is $200t$-colorable, providing the first linear bound for Conjecture \ref{conj:main}, while, as we discussed above, the best known bound for the Hadwiger's conjecture is superlinear.
The bound $200t$ was recently improved by Dvo\v{r}\'ak and Yepremyan \cite{dvo} to $11t+7$.

\begin{theorem}[Dvo\v{r}\'ak--Yepremyan, \cite{dvo}]\label{theorem:mindeg11}
Every graph with minimum degree at least $11t+7$ contains an immersion of $K_t$.
\end{theorem}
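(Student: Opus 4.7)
The plan is to argue by contradiction. Take $G$ to be a graph with $\delta(G) \ge 11t+7$ and no $K_t$ immersion, chosen so that $|V(G)|$ is minimum (breaking ties by, say, maximizing $|E(G)|$). From this minimality one extracts basic structural reductions: removing any vertex should spoil the minimum-degree hypothesis, so the graph is close to being $(11t+7)$-regular, and in particular there is a vertex $v$ of degree exactly $11t+7$.

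The heart of the argument is a Mader-style splitting-off analysis at $v$. The operation of splitting off a pair of edges $vx, vy$ --- deleting both and inserting a new edge $xy$ --- preserves the degrees of all vertices other than $v$, and so, if repeated until $v$ is isolated, produces a smaller multigraph $G'$ still satisfying $\delta(G') \ge 11t+7$. By minimality, $G'$ would contain $K_t$ as an immersion, which lifts back to $G$ since the inverse of splitting-off respects immersions, yielding a contradiction. Thus at some point during the splitting process one must hit an obstruction; by Mader's splitting theorem and its refinements, the obstruction takes the form of a small edge-cut separating certain neighbors of $v$ from the rest. One then exploits the family of all such obstructing cuts to partition $N(v)$ into a bounded number of rigid classes, each with a specific connection pattern to $v$.

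With this structure in hand, the construction of the $K_t$ immersion proceeds directly. The idea is to choose $t$ branch vertices among the rigid classes of $N(v)$ --- possibly including $v$ itself --- and to route the $\binom{t}{2}$ required edge-disjoint paths in two regimes: paths through $v$ as a hub, consuming edges $vw$ that are available in the obstruction structure, and paths within the highly-connected blocks witnessed by the cuts, constructed via Menger's theorem.

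The main obstacle I expect is the accounting. Each obstructing cut of size $k$ at $v$ pins down $k$ of $v$'s edges for a specific routing role, reducing the capacity available for hub-paths, while each branch vertex also consumes degree for the Menger routing inside its block. The constant $11t+7$ would then emerge from summing: roughly $O(t)$ for the Menger routing of $\binom{t}{2}$ paths through $v$, $O(t)$ for the branch-vertex degree overhead in each block, and a small additive constant for degenerate cases and parity issues in the splitting-off step. Extracting a clean value close to $11t+7$ rather than a much larger multiple of $t$ requires careful simultaneous control of these three costs, and the improvement to $7t+7$ announced in the paper presumably comes from tightening the tradeoff between hub-routing and in-block routing losses.
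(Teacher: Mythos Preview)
This theorem is quoted from Dvo\v{r}\'ak--Yepremyan and is not re-proved in the paper; the paper instead proves the sharper $7t+7$ bound by refining their method. So the relevant comparison is with the Dvo\v{r}\'ak--Yepremyan argument as it appears (via the cited Propositions~3.1--3.5) in Section~3, and there your plan diverges from it in a way that is not just stylistic.

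The central misstep is invoking Mader's splitting theorem and treating the obstruction to splitting off at $v$ as a small \emph{edge-cut}. Mader's theorem preserves edge-connectivity, but that is not the quantity at stake here. In a minimal counterexample the graph is \emph{simple}, and the reason you cannot blindly suppress $v$ and appeal to minimality is that suppression may create parallel edges. The genuine obstruction is therefore matching-theoretic, not cut-theoretic: $v$ can be ``well-suppressed'' (suppressed without creating multi-edges) precisely when the complement of $G[N(v)]$ has a perfect matching, and ``nearly well-suppressed'' when that complement is hypomatchable. When neither holds, the Gallai--Edmonds decomposition (Proposition~3.2 in the paper) yields disjoint sets $W,L\subseteq N(v)$ with $|W|\le t-1$, $W$ complete to $L$, and each vertex of $W$ missing at most $|W|$ neighbours. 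If $|W|$ were large this would already contain a complete bipartite graph of minimum degree $t$ and hence a $K_t$ immersion (Proposition~3.1). Otherwise one accumulates a set of roughly $t$ vertices each missing only $O(t)$ neighbours inside a set of size $\ge 11t$ (or $7t$ in the refinement), and a direct density lemma (Lemma~2.1 here) produces the $K_t$ immersion by splitting off paths of length two and four.

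None of the pieces you propose---edge-cut obstructions, Menger routing inside blocks, hub-paths through $v$---appear in the actual argument, and your accounting sketch (``$O(t)$ for Menger routing of $\binom{t}{2}$ paths'') is not how the constant arises. The $11t$ (resp.\ $7t$) comes from needing $|N(v)|$ large enough that, after subtracting $|M|=t$ branch vertices and the $O(t)$ missing degree of each, Lemma~2.1's hypothesis $\sum_{v\in M}f(v)\le (n-t-\max f)t$ is met. The improvement to $7t+7$ is obtained not by tightening a hub/block tradeoff but by a more careful iterated suppression that controls the multi-edges created along the way (Lemma~3.3 and the properties (i)--(vii) in the proof of Lemma~3.6).
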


We give a new result on clique immersions in dense graphs; we leave the exact statement for Section \ref{section:dense} below.  As a consequence, it is possible to improve the analysis in \cite{dvo} and obtain the following bound.

\begin{theorem}\label{theorem:mindeg}
Every graph with minimum degree at least $7t+7$ contains an immersion of $K_t$.
\end{theorem}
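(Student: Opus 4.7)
The plan is to adapt the argument of Dvořák and Yepremyan for Theorem \ref{theorem:mindeg11}, substituting the new dense-graph immersion result announced for Section \ref{section:dense} in place of the corresponding lemma in \cite{dvo}. Accordingly, I take $G$ to be a counterexample minimizing $|V(G)|$ and, subject to that, $|E(G)|$, so that $G$ has minimum degree at least $7t+7$ and contains no immersion of $K_t$.

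The first step is to harvest structural consequences from this minimality. Since deleting any edge produces a graph that does immerse $K_t$, every edge of $G$ is essential, and reductions by splitting off a pair of edges or by contracting an edge must also fail to preserve the counterexample status. These constraints, developed following the same template as in \cite{dvo}, furnish high local connectivity, control on common neighborhoods, and the absence of certain reducible configurations in which a small set of edges could be rerouted via splits to build an immersion.

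The second step is to pick a vertex $v$ whose degree is close to the $7t+7$ minimum, and then locate, inside $N(v)$ together with the edges incident to $v$, a set $S$ of $O(t)$ vertices whose induced density meets the hypothesis of the new dense-graph theorem from Section \ref{section:dense}. An edge-count over the neighborhood of $v$, supported by the structural constraints above, should show that such an $S$ exists once $\deg(v) \geq 7t+7$. Applying the Section \ref{section:dense} theorem to $S$ — using the paths through $v$ to route any edges of the target $K_t$ that cannot be supplied inside $S$ itself — produces a $K_t$ immersion in $G$, contradicting the choice of $G$.

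The main obstacle, and where the improvement of the coefficient from $11$ to $7$ is actually realized, is the quantitative matching between the density that can be extracted locally from $v$ and the density demanded by the new dense-graph theorem. In \cite{dvo}, producing enough density for their lemma from the neighborhood of $v$ cost roughly $11t+7$ degree at $v$; because the new theorem asks for a weaker density hypothesis, less of $\deg(v)$ must be spent to supply it, and $7t+7$ suffices. The hard part is therefore not introducing new combinatorial ideas, but tracking each constant through the DY reduction — edge counts, codegree bounds, and the fractional share of $N(v)$ devoted to the dense witness — and verifying that the new threshold from Section \ref{section:dense} is met exactly at $7t+7$.
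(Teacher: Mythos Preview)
Your plan captures the broad shape of the argument --- reduce to a minimal counterexample, find a locally dense piece, apply the Section~\ref{section:dense} result --- but it misidentifies where the work lies and omits two structural steps that are not recoverable by constant-tracking alone.

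First, the paper does not work directly with a minimal-degree counterexample $G$. It first invokes Proposition~\ref{lemma:eulerian} (from \cite{dvo}) to pass to a $7t$-deficient \emph{eulerian} immersion $G'$ of $G$, and the entire argument (Lemma~\ref{lemma:mainmindeg}) takes place inside $G'$. Eulerianness is essential: the core operation is \emph{suppressing} a vertex (pairing up all its incident edges and splitting each pair off), which requires even degree. Your edge-minimality reductions do not give this, and without it you cannot iterate the suppression that drives the proof.

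Second, your claim that ``the hard part is not introducing new combinatorial ideas, but tracking each constant'' is where the proposal breaks down. The paper's improvement over \cite{dvo} rests on genuinely new machinery, not bookkeeping. The proof attempts to suppress a sequence of vertices $z_p, z_{p-1}, \dots$ while keeping the graph simple; when this fails, multi-edges of multiplicity two are introduced in a tightly controlled way (properties \ref{en:e.1}--\ref{en:e.7}), and a new technical lemma (Lemma~\ref{lem:hypo-inside}) analyzes exactly this situation --- a vertex $z$ joined to a simple graph by a mix of single and double edges that cannot be well- or nearly-well-suppressed --- to extract a set $W$ with small missing degree. This lemma has no analogue in \cite{dvo}, and without it you cannot close the gap between what suppression leaves behind and what Lemma~\ref{lemma:average1} requires. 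Finally, even after assembling the candidate branch set $M$, the paper splits into two cases ($p \le t/2$ and $p > t/2$); the second requires an explicit three-stage linking algorithm rather than a direct application of Lemma~\ref{lemma:average1}, because $\max_{v\in M} f(v)$ may be too large. None of this is visible in your outline, and an attempt to simply replay \cite{dvo} with a sharper dense-graph lemma plugged in will stall at the suppression step.
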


Conjecture \ref{conj:main} can be relaxed to consider the following question.
\begin{problem}
What is the smallest function $f$ such that for all positive $t$ and all graphs $G$ with $\chi(G) \ge f(t)$, it holds that $G$ contains $K_t$ as an immersion.
\end{problem}
As observed above, a minimum counterexample to Conjecture \ref{conj:main} has minimum degree $t-1$.  Thus by Theorem \ref{theorem:mindeg}, we get that chromatic number at least $f(t) = 7t+8$ forces a $K_t$ immersion.  By combining our results for dense graphs with arguments based on analyzing Kempe chains in proper colorings of graphs, we obtain the following improved bound.

\begin{theorem}\label{theorem:chromatic}
Every graph with chromatic number at least $3.54t+4$ contains an immersion of $K_t$.
\end{theorem}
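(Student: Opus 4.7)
The plan is to argue by contradiction using a minimum counterexample, combining Theorem~\ref{theorem:mindeg} with the dense-graph result stated in the abstract (and proved in Section~\ref{section:dense}) via Kempe chain recoloring. Let $G$ be a counterexample with $\chi(G) \geq k := \lceil 3.54t + 4 \rceil$ minimizing $|V(G)|$; then $G$ is vertex-critical, so $\delta(G) \geq k-1 \geq 3.54t + 3$, while Theorem~\ref{theorem:mindeg} applied to $G$ itself forces some vertex $v$ to satisfy $d(v) \leq 7t + 6$.

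First I would fix a proper $k$-coloring $\varphi$ of $G$ and observe, via a standard Kempe recoloring argument, that in a vertex-critical graph $v$ must have at least one neighbor in every color class other than its own. Since $d(v) \leq 7t+6 < 2(k-1)$, this forces many colors to be represented by a single neighbor of $v$; call these the singleton colors. Second, I would apply the dense-graph theorem to $G[N(v)]$: if $N(v)$ contains no stable set of size three, then since $d(v) \geq 3.54t+3 \geq 5t/2$ the induced subgraph immerses $K_{2\lfloor d(v)/5 \rfloor} \supseteq K_t$, already giving the desired contradiction. Hence we may assume that $N(v)$ contains a stable triple.

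The remaining and most delicate case is when $G[N(v)]$ contains three pairwise non-adjacent vertices. The plan is to pick $t-1$ neighbors $u_1, \ldots, u_{t-1}$ of $v$ in pairwise distinct color classes (using the singleton colors produced above) and build edge-disjoint paths $P_{ij}$ between each pair $u_i, u_j$ inside the bichromatic component of $\varphi$ on colors $\{\varphi(u_i), \varphi(u_j)\}$; together with the edges $v u_i$ these furnish the $\binom{t}{2}$ edge-disjoint routes needed for a $K_t$ immersion, and edge-disjointness across distinct color pairs is automatic since a bichromatic component for colors $\{a,b\}$ only uses edges between an $a$-vertex and a $b$-vertex. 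When $u_i$ and $u_j$ happen to lie in different bichromatic components for their shared color pair, a Kempe swap on one of the components produces a new $k$-coloring that tightens the configuration; iterating such swaps, assisted by the slack coming from the stable triple in $N(v)$, should terminate at a coloring where all needed bichromatic paths exist simultaneously.

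The principal obstacle is the quantitative bookkeeping in this last step: one must show that enough singleton colors survive the Kempe modifications to supply $t-1$ branch vertices. The coefficient $3.54$ emerges as the balance point between the dense-graph threshold $5t/2$ from the second step and the low-degree bound $7t+6$ inherited from Theorem~\ref{theorem:mindeg}, and the factor-of-roughly-two improvement over the naive bound $7t+8$ is precisely the gain extracted from this refined Kempe-chain counting.
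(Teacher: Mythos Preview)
Your setup (pass to a vertex-critical graph, find a vertex $v$ of degree at most $7t+6$ via Theorem~\ref{theorem:mindeg}, and study Kempe chains among the neighbors of $v$) matches the paper, but the rest of the plan has a genuine gap.

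The case split on whether $G[N(v)]$ contains a stable triple is a red herring. The paper never uses Theorem~\ref{theorem:2.5} here; the ``dense-graph result'' actually invoked is Lemma~\ref{lemma:average2} (average missing degree), applied not to $G[N(v)]$ but to a carefully constructed graph on singletons and (half of the) doubletons after many splittings. More importantly, your main case (when $N(v)$ has a stable triple) does not go through. You propose to take $t-1$ singleton neighbors as branch vertices together with $v$, but there is no guarantee that the number $\alpha$ of singletons is at least $t-1$: the only lower bound available is $2\alpha+\beta \ge 3(\ell-1)-|N|$, which is perfectly compatible with $\alpha$ being tiny (or zero) provided $\beta$ is large. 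Indeed, the paper shows $\alpha \le t-1$ precisely because the singleton-to-singleton Kempe chains already produce a clique; one cannot hope for more. The phrase ``assisted by the slack coming from the stable triple'' does not correspond to any mechanism that would produce more singletons or compensate for their absence.

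What the paper actually does in this situation is substantially more intricate. Doubletons must be used as branch vertices, and the Kempe chains between two doubleton colors $b_i,b_j$ can behave in three qualitatively different ways (linking $y_i$--$y_j$ and $y_i'$--$y_j'$, or the crossed pairing, or a non-disjoint tree connecting all four). To extract a dense simple graph on a set of the form $X\cup\{y_1,\dots,y_\beta\}$, the paper encodes these outcomes as a two-coloring (odd/even) of an auxiliary graph $H$ on $X\cup Z$, removes maximal families of edge-disjoint odd triangles, performs swaps to balance the parities, and only then counts missing edges and applies Lemma~\ref{lemma:average2}. The constant $3.54$ arises from this analysis (specifically from the inequality $\alpha+\beta<2.62(t+1)$ together with $\alpha\le t-1$), not from any interplay with the $5t/2$ threshold of Theorem~\ref{theorem:2.5}. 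Your proposal is missing this entire layer of the argument.
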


For graphs with no stable set of size three, Vegara \cite{V17} proposed a similar conjecture as that of Duchet and Meyniel that every graph with $n$ vertices and no stable set of size three contains a strong $K_{\lceil n/2\rceil}$-immersion and proved that it is equivalent to Conjecture \ref{conj:main} for graphs with no stable set of size three. In the same paper, Vegara showed that a relaxation to $K_{\lceil n/3\rceil}$-immersion holds. We improve this to $K_{2\lfloor n/5 \rfloor}$.

\begin{theorem}\label{theorem:2.5}
For every integer $n\ge 1$, every graph $G$ with $n$ vertices and no stable set of size three has a strong immersion of $K_{2\lfloor n/5 \rfloor}$.
\end{theorem}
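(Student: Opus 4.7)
The strategy is to exploit a maximum matching in the complement $\overline{G}$, which is triangle-free since $\alpha(G) \le 2$. Let $M$ be a maximum matching in $\overline{G}$ with edges $\{a_ib_i : i \in [m]\}$, and let $U$ be the set of unmatched vertices, so $|U| = n - 2m$. By maximality of $M$, no edge of $\overline{G}$ lies inside $U$, so $U$ is a clique of $G$. If $|U| \ge 2\lfloor n/5\rfloor$, then $U$ already contains the desired $K_{2\lfloor n/5\rfloor}$ as a subgraph and we are done. Otherwise $m > (n - 2\lfloor n/5\rfloor)/2$, so many matching pairs are available to draw from.

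Set $k = 2\lfloor n/5\rfloor$ and $s = k - |U|$. I would select a set $S \subseteq [m]$ with $|S| = s$ and a representative $r_i \in \{a_i, b_i\}$ for each $i \in S$, and take as branch vertex set
\[
B \;=\; U \;\cup\; \{r_i : i \in S\}, \qquad |B| = k.
\]
Writing $\overline{r_i}$ for the element of $\{a_i,b_i\} \setminus \{r_i\}$, the non-branch set $V(G)\setminus B$ contains $\{\overline{r_i} : i \in S\}$ together with all $2(m-s)$ vertices of unselected matching pairs, giving $n-k$ vertices available as internal vertices for the immersion paths.

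To realize the strong $K_k$-immersion on $B$, every pair $\{x,y\} \subseteq B$ must be joined by a path in $G$ whose internal vertices lie in $V(G) \setminus B$, with all such paths pairwise edge-disjoint. Adjacent pairs use the direct edge; the non-edges of $B$ split into two classes: between $U$ and a representative $r_i$, or between two representatives $r_i, r_j$. The key structural input is the triangle-freeness of $\overline{G}$: for each $i$ and each vertex $v \notin \{a_i, b_i\}$, at least one of $va_i, vb_i$ is an edge of $G$. This forces, between any two matching pairs, the $\overline{G}$-edges to lie in one of only a few patterns---either ``parallel'' $\{a_ia_j, b_ib_j\}$ or ``crossing'' $\{a_ib_j, b_ia_j\}$, with at most two such edges in total. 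Each non-edge of $B$ can then be covered by a short path through the partner $\overline{r_i}$ (which, by triangle-freeness, is $G$-adjacent to the other endpoint of the non-edge) and, when necessary, through a vertex of some unselected matching pair.

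The main obstacle is to choose $S$, the representatives $r_i$, and the path routings simultaneously so that the resulting paths are pairwise edge-disjoint. I would phrase this as a combinatorial assignment problem and approach it via a Hall-type matching argument, exploiting the fact that the auxiliary graph on $[m]$ recording ``parallel'' conflicts is itself triangle-free (again by triangle-freeness of $\overline{G}$). The constant $2/5$ should emerge from balancing the sizes of $|U|$, $s$, and $m - s$, and I expect this routing step, including the fine-tuning of parities and small cases, to be the technical heart of the proof.
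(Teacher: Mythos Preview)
Your approach is genuinely different from the paper's, and the core routing step is left as a hope rather than an argument. The paper does not work with a matching in $\overline{G}$ at all; instead it argues by induction on $n$ in steps of five. A minimal counterexample must contain an induced $C_5$ (edge-minimality forces this), and deleting that $C_5$ leaves a graph on $n-5$ vertices with no stable set of size three, which by induction has a strong $K_{2t}$-immersion with branch set $M$. The paper then shows that two non-adjacent vertices $a_1,a_3$ of the $C_5$ can be added to $M$: for each $v\in M$ not already adjacent to $a_1$ (respectively $a_3$), it routes a path of length two or three through the remaining $C_5$-vertices and carefully chosen vertices of $Q=U\setminus M$. The constant $2/5$ falls out immediately from ``remove five vertices, gain two branch vertices''; the work is entirely in the routing, which is handled by a clean pigeonhole-type claim (their Claim~5.4) about the non-neighbour sets $A_i$ of the $C_5$-vertices inside $U$.

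In your plan, by contrast, the constant $2/5$ is not derived: you assert it ``should emerge from balancing the sizes of $|U|$, $s$, and $m-s$'', but no such balancing is carried out, and I do not see how a Hall-type argument on the parallel-conflict graph yields this particular constant rather than, say, $1/3$ (Vergara's bound) or something weaker. More concretely, to join a non-adjacent pair $r_i,r_j\in B$ you cannot use the path $r_i\,\overline{r_i}\,r_j$ directly, since $r_i\overline{r_i}\notin E(G)$; you need paths of length at least three, and once several non-edges share a partner vertex the edge-disjointness constraints interact globally. You correctly observe that the parallel-conflict graph on $[m]$ is triangle-free, but triangle-freeness alone does not obviously give you enough room to route $\Theta(k^2)$ potentially bad pairs through only $n-k$ internal vertices while keeping paths edge-disjoint. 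Until you can show precisely how the representative choices and routings are made---and why the arithmetic forces $k=2\lfloor n/5\rfloor$ rather than a smaller value---this remains a plan with its technical heart missing.
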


An extended abstract presenting Theorems \ref{theorem:mindeg} and \ref{theorem:chromatic} appeared in 2016 \cite{LW}.
\subsection{Notation}

Given a multigraph $G$ and distinct vertices $u,v\in V(G)$, if there are $k\ge 2$ edges between $u$ and $v$, we say that $uv$ is a \emph{multi-edge} with \emph{multiplicity} $k$, and if $u$ is not adjacent to $v$, we say that $uv$  is a \emph{missing edge}.
We denote by $N_G(v)$ the (non-repeated) set of neighbors of $v$ in $G$, and by $d_G(v)$ the {degree} of $v$ in $G$ (where a loop is counted $2$ and a multi-edge with multiplicity $k$ is counted $k$). 
We denote by $E_G(v)$ the multi-set of edges (loops are excluded) incident with $v$ (if $uv$ is a multi-edge of multicity $k$ then there are $k$ edges $uv$ in $E_G(v)$).
Given $X\subseteq V(G)$, we denote by $f_G(v|X)$ the number of vertices in $X\backslash\{v\}$ which are not adjacent to $v$ in $G$, and we write $f_G(v)=f_G(v|V(G))$ for short.
When it is clear in the context, we omit the subscript $G$ in this notation. Note that if $G$ is simple, then $d(v)=|N(v)|=|E(v)|=|V(G)|-f(v)-1$, but may not be the case if $G$ is a multigraph.

Given a multigraph $G$ and a subset $M$ of $V(G)$, let $G[M]$ denote the subgraph of $G$ induced by $M$.
Given a path linking vertices $u$ and $v$, to \emph{split off the path}, we delete the edges of the path and add an edge $uv$ to $G$. Given a vertex $v$ with $|E_G(v)|$ even, to \emph{suppress} $v$, we first match all edges of $E_G(v)$ into pairs; then we split off every pair $\{vu,vw\}$ of the matching, and finally delete $v$ and its loops (if any). 
Note that after suppressing a vertex, the degree of other vertices are unchanged. Both operations (splitting off a path and suppressing a vertex) can be expressed as a sequence of splitting off pairs of edges.
Given two multigraphs $G$ and $G'$, we define the \emph{union} of $G$ and $G'$, denoted $G \cup G'=G^*$ to be the multigraph with vertex set $V(G) \cup V(G')$ and the following edge set. For every two vertices $u$ and $v$ in $V(G) \cup V(G')$, the number of edges $uv$ in $G^*$ is equal to the sum of the number of edges $uv$ in $G$ and $G'$.

The structure of the paper is as follows. In sections \ref{section:dense}, we give some results on clique immersion in dense graphs, which are necessary for the proofs of Theorems \ref{theorem:mindeg} and \ref{theorem:chromatic}. Then we prove Theorems  \ref{theorem:mindeg}, \ref{theorem:chromatic}, and \ref{theorem:2.5} in Sections \ref{section:minimum}, \ref{section:chromatic}, and \ref{section:2.5}, respectively.

%%%%%%%%%%%%%%%%%%%%%%%%%%%%%%%%%%%%%%%%%%
%%%%%%%%%%%%%%%%%%%%%%%%%%%%%%%%%%%%%%%%%%
\section{Clique immersion in dense graphs} \label{section:dense}
%%%%%%%%%%%%%%%%%%%%%%%%%%%%%%%%%%%%%%%%%%
%%%%%%%%%%%%%%%%%%%%%%%%%%%%%%%%%%%%%%%%%%

In the following lemma, we show that if $G$ contains a set $M$ of $t$ vertices where the total sum of ``missing degree'' is small, then $G$ immerses a $K_t$ on $M$.

\begin{lemma} \label{lemma:average1}
Let $G=(V,E)$ be a graph with $n$ vertices and $M$ be a subset of $V$ with $t$ vertices.
If 
\begin{equation}\label{eq:dense1}
\sum_{v\in M} f_G(v)\le \Big(n-t-\max_{v\in M}f_G(v)\Big)t,
\end{equation}
then $G$ contains an immersion of $K_t$.
\end{lemma}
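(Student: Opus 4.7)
The plan is to proceed by induction on the number of missing edges in $G[M]$, that is on $\tfrac{1}{2}\sum_{v\in M} f_G(v|M)$. Since the inductive step produces a multigraph via splitting off, I would formulate the statement for multigraphs throughout (noting that $f_G(\cdot)$ is well-defined on multigraphs since it only records adjacency, not edge multiplicity). The base case is immediate: if $G[M]$ has no missing edges then $G[M]=K_t$, and the edges of $G[M]$ already give the desired $K_t$-immersion with branch set $M$.

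For the inductive step, I would select a missing pair $\{u,v\}\subseteq M$ together with a common neighbor $w\in V\setminus M$ of $u$ and $v$ in $G$, and split off the length-2 path $u$--$w$--$v$. This yields a multigraph $G'$ in which $u$ and $v$ are adjacent through the new edge, so the number of missing edges in $G'[M]$ is strictly smaller. Moreover, $\sum_{v'\in M} f_{G'}(v') = \sum_{v'\in M} f_G(v')-2$ and $\max_{v'\in M} f_{G'}(v')\leq \max_{v'\in M} f_G(v')$, so the hypothesis (\ref{eq:dense1}) is preserved (with strictly more slack). By induction $G'$ immerses $K_t$ on $M$, and undoing the split yields the immersion in $G$.

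The heart of the argument is showing that such a triple $(u,v,w)$ always exists. For any missing pair $\{u,v\}\subseteq M$, inclusion-exclusion gives
\[
|N_G(u)\cap N_G(v)\cap(V\setminus M)| \;\geq\; (n-t) - f_G(u|V\setminus M) - f_G(v|V\setminus M) \;\geq\; (n-t) - f_G(u) - f_G(v) + 2,
\]
where the last step uses $f_G(u|M)\geq 1$ and $f_G(v|M)\geq 1$ since $uv$ is missing. So existence reduces to finding a missing pair with $f_G(u)+f_G(v)\leq n-t+1$. If no such pair exists, then every missing pair $\{u,v\}\in F$ satisfies $f_G(u)+f_G(v)\geq n-t+2$; summing and using the identity $\sum_{\{u,v\}\in F}(f_G(u)+f_G(v)) = \sum_{v\in M} f_G(v)\cdot f_G(v|M)$, I would derive a lower bound that must be reconciled with (\ref{eq:dense1}). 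The final contradiction should come from peeling off the vertex achieving $\max_{v\in M} f_G(v)$ and applying the averaged density condition to the remaining $t-1$ vertices, so that the specific form $(n-t-\max_{v\in M} f_G(v))\cdot t$ on the right of (\ref{eq:dense1}) is sharp enough to force contradiction.

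The main obstacle is closing this double-counting argument cleanly: the naive bound $f_G(v)\leq \max_{v'\in M} f_G(v')$ alone is insufficient, as a direct computation shows. The subtraction of $\max_{v\in M} f_G(v)$ in (\ref{eq:dense1}) is calibrated precisely to make the peeling-off argument go through, and I expect handling this step--together with the edge case of length-2 paths whose midpoint would have to lie in $M$ rather than $V\setminus M$--to be the delicate part of the proof.
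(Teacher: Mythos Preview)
Your inductive scheme has a real gap: length-$2$ paths through $V\setminus M$ need not exist even when \eqref{eq:dense1} holds, so the double-counting you hope to close cannot be closed. Take $n=10$, $t=4$, $M=\{v_1,v_2,v_3,v_4\}$, $\overline M=\{w_1,\dots,w_6\}$; let $v_1v_2$ be the unique missing edge inside $M$, let $v_1$ miss exactly $w_1,w_2,w_3$ and $v_2$ miss exactly $w_4,w_5,w_6$, and let $v_3,v_4$ be complete to everything. Then $f(v_1)=f(v_2)=4$, $f(v_3)=f(v_4)=0$, $b=4$, and $\sum_{v\in M}f(v)=8=(n-t-b)t$, so the hypothesis holds with equality---yet $v_1,v_2$ have no common neighbour in $\overline M$, and this is the only missing pair. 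No amount of peeling off the maximum helps, since the two endpoints of the missing edge \emph{are} the maximisers. (Incidentally, your claim that $\sum_{v'\in M}f_{G'}(v')$ drops by $2$ after splitting off $u$--$w$--$v$ is also wrong: each of $u,v$ loses $w$ as a neighbour and gains the other, so $f(u),f(v)$ and hence the sum are unchanged. The hypothesis is preserved, but with no extra slack.)

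The paper's proof begins exactly as you do, exhausting all length-$2$ paths $vwv'$ with $w\in\overline M$, but then the remaining missing edges are handled by length-$4$ paths. Once no length-$2$ triples remain, every endpoint $v$ of a surviving missing edge satisfies $f(v\mid\overline M)\ge n-t-b$; the paper then shows that the vertices of $M$ \emph{not} incident with any missing edge (the set $Y$) have enough collective slack---quantified by $h(u)=\max\bigl(0,\lfloor(n-t-b-f(u)+1)/2\rfloor\bigr)$---to route each remaining missing pair $v,v'$ along a path $vwuw'v'$ with $u\in Y$ and $w,w'\in\overline M$. In the example above one routes $v_1v_2$ through $v_3$. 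The inequality $\sum_{u\in Y}h(u)\ge r$ is precisely where the specific right-hand side $(n-t-b)t$ of \eqref{eq:dense1} is used.
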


\begin{proof}
Let $\MM=V\backslash M$ and let $b=\max_{v\in M}f_{G}(v)$. 
Suppose that there are distinct vertices $v,v'\in M$ and $w\in \MM$ such that $vv'\notin E(G)$ and $vw,wv'\in E(G)$. By splitting off the path $vwv'$, we obtain the edge $vv'$ while $f(v)$ and $f(v')$ are unchanged, and so (\ref{eq:dense1}) still holds for the new graph. Thus by repeatedly finding such triples and splitting off, we obtain new graphs satisfying (\ref{eq:dense1}) while the number of edges strictly decreases after each step. 
Therefore the process must halt and return a graph $G_1=(V,E_1)$ satisfying
\begin{equation}\label{en:dense1}
\sum_{v\in M} f_{G_1}(v)\le \big(n-t-b\big)t, \text{ and}
\end{equation}
\begin{enumerate}[label=(\roman*)]
\item \label{en:dense2} there are no $v,v'\in M$ and $w\in \MM$ such that $vv'\notin E_1$ and $vw,wv'\in E_1$.
\end{enumerate}

For the rest of the proof, we write $f$ instead of $f_{G_1}$.  Let $r$ be the number of missing edges of $G_1$ with two endpoints in $M$, and $X$ be the set of endpoints of these missing edges. If $r=0$, then $G_1[M]$ is a copy of $K_t$, which proves the lemma. Hence we may suppose that $r\ge 1$.

For every $v\in X$, there is $v'\in M$ such that $vv'\notin E_1$. From \ref{en:dense2} we have
$f(v|\MM)+f(v'|\MM)\ge |\MM|=n-t;$
otherwise, there exists $w\in \MM$ such that $vw,wv'\in E_1$. 
Hence 
$$n-t\le f(v|\MM)+f(v'|\MM)\le f(v|\MM)+f(v')\le f(v|\MM)+b,$$ and so $f(v|\MM)\ge n-t-b$ for every $v\in X$. This gives 
\begin{equation}
\sum_{v\in X} f(v)= \sum_{v\in X}f(v|\MM)+\sum_{v\in X} f(v|M)\ge (n-t-b)|X| + 2r.\label{equation:XY}
\end{equation}

We will construct a $K_t$ immersion in $G_1$ as follows: for every non-adjacent pair of vertices $v,v'$ in $X$, we will obtain the edge $vv'$ by splitting off path $vwuw'v'$ for some $u\in Y= M \setminus X$ and $w,w' \in \MM$. 
As a first step to finding such 4-edge paths, for all $u \in Y$, define $$h(u)=\max\Big(0,\Big\lfloor\frac{n-t-b-f(u)+1}{2}\Big\rfloor\Big).$$
It holds that $2h(u)\ge n-t-b-f(u)$. Hence 
$$2\sum_{u\in Y}h(u) \ge (n-t-b)|Y|-\sum_{u\in Y}f(u).$$
Combining with \eqref{equation:XY}, and then with (\ref{en:dense1}) yields
\begin{align*}
2\sum_{u\in Y}h(u) -2r&\ge \Big((n-t-b)|Y|-\sum_{u\in Y}f(u)\Big)+\Big( (n-t-b)|X|-\sum_{v\in X}f(v)\Big)\\
&\ge (n-t-b)(|X|+|Y|)-\sum_{v\in M}f(v)\\
&\ge (n-t-b)t-(n-b-t)t= 0.
\end{align*}
Hence $\sum_{u\in Y}h(u) \ge r$. 

Choose arbitrarily two non-adjacent vertices $v,v'$ in $M$ (clearly $v,v'\in X$), and an arbitrary vertex $u\in Y$ such that $h(u)\ge 1$.
Such a vertex $u$ always exists as $\sum_{u\in Y}h(u)\ge r\ge 1$ and $h(u)$ is an integer for every $u$. 
By definition of function $h$, we have
$$f(u|\MM)\le f(u)\le n-t-b+1-2h(u)\le n-t-b-1.$$ 
From $f(v)\le b$, we have $$f(u|\MM)+f(v|\MM)\le (n-t-b-1)+f(v)\le n-t-1<|\MM|,$$ 
so $u$ and $v$ have a common neighbor $w\in \MM$. 
Similarly $u$ and $v'$ have a common neighbor $w'\in \MM$. If $w=w'$ then $vw,wv'\in E_1$, contrary to \ref{en:dense2}. 

By splitting off the path $vwuw'v'$, we get the edge $vv'$.  In doing so, 
we have that $f(v)$ and $f(v')$ remain unchanged while $f(u)$ increases by 2, i.e., $h(u)$ decreases by 1. 
Thus $\sum_{u\in Y}h(u)$ decreases by 1. 
However, the number of missing edges in $G_1[M]$ also decreases by 1, so we still have that $\sum_{u\in Y}h(u)$ is at least the number of missing edges in $G_1[M]$. 
We repeat the process above until we link all pairs of non-adjacent vertices in $M$, and so obtain a complete graph on $M$. Thus $G_1$ contains an immersion of $K_t$, and consequently, $G$ contains $K_t$ as an immersion as well.  This proves the lemma.
\end{proof}

As a corollary of Lemma \ref{lemma:average1}, the following lemma provides a more general bound for clique immersion of a graph by its average ``missing degree''.

\begin{lemma}\label{lemma:average2}
Let $G$ be a graph on $n$ vertices, and let $\ff=\sum_{v\in V(G)}f_G(v)/n$ be the average ``missing degree" of $G$. 
If $\ff \le n/2$, then $G$ contains an immersion of $K_t$ where $t=\min\big(
\lfloor n/2 \rfloor,\lfloor n -2\ff\rfloor\big)$.
\end{lemma}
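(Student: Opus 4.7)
The plan is to apply Lemma~\ref{lemma:average1} to a carefully chosen subset $M\subseteq V(G)$ of size $t$. I would order the vertices $v_1,\dots,v_n$ so that $f_G(v_1)\le f_G(v_2)\le\dots\le f_G(v_n)$ and set $M=\{v_1,\dots,v_t\}$, so that $b:=\max_{v\in M}f_G(v)=f_G(v_t)$. With this choice two complementary upper bounds on $\sum_{v\in M}f_G(v)$ become available: the direct bound $\sum_{v\in M}f_G(v)\le tb$, since each term is at most $b$; and the bound $\sum_{v\in M}f_G(v)=n\ff-\sum_{v\notin M}f_G(v)\le n\ff-(n-t)b$, since each term outside $M$ is at least $b$. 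The task reduces to verifying the hypothesis $\sum_{v\in M}f_G(v)\le (n-t-b)t$ of Lemma~\ref{lemma:average1}.

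I would then split on whether $b\le\ff$ or $b>\ff$. In the first case, the direct bound gives $\sum_{v\in M}f_G(v)\le tb\le t\ff$, and the required inequality reduces to $t+b+\ff\le n$; this follows from $b\le\ff$ combined with $t\le\lfloor n-2\ff\rfloor\le n-2\ff$. In the second case, the complementary bound gives $\sum_{v\in M}f_G(v)\le n\ff-(n-t)b$, and the required inequality rearranges to $n\ff\le t(n-t)+b(n-2t)$. Since $t\le\lfloor n/2\rfloor$ ensures $n-2t\ge 0$, and $b>\ff$, the right-hand side only decreases if $b$ is replaced by $\ff$, reducing the task to the sufficient inequality
\[
n\ff \le t(n-t)+\ff(n-2t) = n\ff + t(n-t-2\ff),
\]
which is equivalent to $t\le n-2\ff$ and hence holds.

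Once the hypothesis of Lemma~\ref{lemma:average1} is verified, the lemma produces a $K_t$ immersion in $G$, completing the argument. The algebra is routine; the main conceptual point is that both bounds on $\sum_{v\in M}f_G(v)$ are needed simultaneously, which is precisely what forces the choice of $M$ as the $t$ vertices of \emph{smallest} missing degree, and that the two terms $\lfloor n/2\rfloor$ and $\lfloor n-2\ff\rfloor$ in the minimum defining $t$ correspond exactly to the conditions needed to close the argument in the two regimes $b>\ff$ and $b\le\ff$ respectively.
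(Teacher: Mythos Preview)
Your proof is correct and follows essentially the same approach as the paper: choose $M$ to be the $t$ vertices of smallest missing degree, set $b=\max_{v\in M}f_G(v)$, and verify the hypothesis of Lemma~\ref{lemma:average1} using either the direct bound $\sum_{v\in M}f_G(v)\le tb$ or the complementary bound $\sum_{v\in M}f_G(v)\le n\ff-(n-t)b$, according to whether $b$ is small or large. The only cosmetic difference is the threshold for the case split --- you use $b\le\ff$ versus $b>\ff$, whereas the paper uses $2b\le n-t$ versus $2b>n-t$ --- and your choice makes the algebra in the second case somewhat cleaner.
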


\begin{proof}
Let $M$ be a set of $t=\min\big(
\lfloor n/2 \rfloor,\lfloor n -2\ff\rfloor\big)$ vertices minimizing $\sum_{v\in M}f(v)$. Let $b=\max_{v\in M}f(v)$ and $\MM=V(G)\backslash M$.

If $2b\le n-t$, note that $f(v)\le b$ for every $v\in M$, and so $\sum_{v\in M}f(v)\le bt\le (n-t-b)t$, and we apply Lemma \ref{lemma:average1} to complete the proof. 

Otherwise, $2b> n-t$. By the minimality of $f$ on $M$,  we have $f(w)\ge b$ for every $w\in \MM$. Hence
\begin{equation}
\sum_{v\in M}f(v)= \sum_{v\in V(G)}f(v)- \sum_{w\in \MM}f(w) \le \ff n-b(n-t).\label{equation:0}
\end{equation}

We now show that $\ff n-b(n-t) \le (n-t-b)t$.
Indeed,
\begin{align}
\ff n-b(n-t) &\le (n-t-b)t\nonumber \\
\Longleftrightarrow 2(\ff n-bn+bt) &\le 2(n-t-b)t\nonumber \\
\Longleftrightarrow\ \   2\ff n-n^2+tn &\le 2b(n-2t)-(n-t)(n-2t) \nonumber\\
\Longleftrightarrow\ \ \  (2\ff+t-n)n &\le (2b-n+t)(n-2t).
\label{equation:1}
\end{align}

Since $t=\min\big(
\lfloor n/2 \rfloor,\lfloor n -2\ff\rfloor\big)$, we have $2\ff \le n-t$ and $2t\le n$. Combining with $2b>n-t$ yields
$$(2\ff +t-n)n\le 0\le (2b-n+t)(n-2t).$$ Hence (\ref{equation:1}) holds, and so $\ff n-b(n-t) \le (n-t-b)t$.  This, together with equality (\ref{equation:0}), implies that $\sum_{v\in M}f(v)\le (n-t-b)t$, and we apply Lemma \ref{lemma:average1} to complete the proof. 
\end{proof}
In the case $n/4\le \ff\le n/2$, by tightening the analysis, we can slightly improve the bound in Lemma \ref{lemma:average2} to $t=\lfloor n-2\ff  \rfloor+1$, which is sharp even if $\ff$ is the maximum missing degree (see \cite{FW16}, Lemma 2.1). In the case $\ff<n/4$, the above technique could yield $t=\max\big(\lfloor n/2\rfloor,\lfloor n-\sqrt{2\ff n}\rfloor\big)$; however, $t=\lfloor n/2\rfloor$
is enough for our purpose.

%%%%%%%%%%%%%%%%%%%%%%%%%%%%%%%%%%%%%%%%%%%
%%%%%%%%%%%%%%%%%%%%%%%%%%%%%%%%%%%%%%%%%%%
\section{Forcing a clique immersion via minimum degree} \label{section:minimum}
%%%%%%%%%%%%%%%%%%%%%%%%%%%%%%%%%%%%%%%%%%%
%%%%%%%%%%%%%%%%%%%%%%%%%%%%%%%%%%%%%%%%%%%

In this section, we show how the proof of Theorem \ref{theorem:mindeg11} can be refined to give the proof of Theorem \ref{theorem:mindeg}. 
The main idea is as follows. Suppose, to reach a contradiction, that there is a graph with high minimum degree which does not contain a $K_t$-immersion. We choose such a graph $G$ with as few vertices as possible. If $G$ is dense, then we can find a $K_t$ immersion, a contradiction. Otherwise, $G$ is sparse, and so we can supress a vertex to get a smaller graph, which still has high minimum degree and does not contain a $K_t$-immersion, a contradiction again. The main difficulty is how to suppress a vertex of $G$ so that the new graph is still simple. 
We first state several results from \cite{dvo}.

\begin{proposition}[\cite{dvo}, Lemma 6]\label{lemma:completemul}
Every complete multipartite graph of minimum degree at least $t$ contains an immersion of $K_t$.
\end{proposition}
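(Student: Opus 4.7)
My plan is to prove Proposition~\ref{lemma:completemul} by a direct construction of the immersion, relying essentially on the chromatic-index bound $\chi'(K_m)\le m$. Write the parts of $G$ as $V_1,\ldots,V_k$ with $|V_i|=a_i$; the hypothesis $\delta(G)\ge t$ is exactly the statement that $a_i \le n-t$ for every $i$.

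I would pick any set $B\subseteq V(G)$ of size $t$ to serve as the branch vertices and let $m_i = |B\cap V_i|$, so $\sum_i m_i = t$. The edges of the target $K_t$ on $B$ split into inter-part pairs, which are already edges of $G$ and need no routing, and intra-part pairs, which form a disjoint union of cliques $K_{m_i}$ on the sets $B\cap V_i$ with $m_i\ge 2$. Each intra-part pair $uu'\subseteq V_i$ will be realized by a length-two path $u\,w\,u'$ through an intermediate $w\in V(G)\setminus(V_i\cup B)$; the count
\[
|V(G)\setminus(V_i\cup B)| \;=\; (n-a_i)-(t-m_i)\;\ge\; m_i
\]
follows in one line from $a_i\le n-t$. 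For each $i$ with $m_i\ge 2$ I would then invoke $\chi'(K_{m_i})\le m_i$ to partition the intra-part edges on $B\cap V_i$ into at most $m_i$ matchings, and assign each matching a distinct intermediate from $V(G)\setminus(V_i\cup B)$, which is possible by the inequality above.

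The one piece of bookkeeping that I expect to be the main obstacle is verifying edge-disjointness globally. Within a single part $V_i$, disjointness is immediate: distinct color classes receive distinct intermediates, and each class is a matching, so each assigned intermediate uses each of its incident edges to $B\cap V_i$ at most once. Across distinct parts $V_i$ and $V_{i'}$, an intermediate $w$ may well be reused, but the two colorings only touch $w$'s edges to the disjoint sets $B\cap V_i$ and $B\cap V_{i'}$, so no edge is used twice; the direct inter-part edges of $G[B]$ also cause no conflict because they lie entirely inside $B$ while every routed path uses edges incident to a vertex outside $B$. Once this sharing is checked, the argument is complete, and in particular it does not need to appeal to Lemma~\ref{lemma:average1} or Lemma~\ref{lemma:average2}.
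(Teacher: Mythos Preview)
The paper does not actually prove this proposition; it is quoted from \cite{dvo} without argument. Your proof is correct and is essentially the natural construction (and, as far as one can tell, the same idea as in \cite{dvo}): use the edges of $G[B]$ for cross-part pairs, and for the intra-part clique $K_{m_i}$ on $B\cap V_i$ route each edge through a length-two path via an intermediate in $V(G)\setminus(V_i\cup B)$, distributing the $\binom{m_i}{2}$ pairs among $m_i$ intermediates via a proper edge-colouring of $K_{m_i}$. Your edge-disjointness check is complete: within a part, distinct colour classes use distinct intermediates and each class is a matching; across parts, a shared intermediate $w$ only touches edges from $w$ to the disjoint sets $B\cap V_i$ and $B\cap V_{i'}$; and the direct edges of $G[B]$ lie entirely inside $B$ while every routed edge is incident to a vertex outside $B$. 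The one implicit point you might make explicit is that any $w\in V(G)\setminus V_i$ is automatically complete to $V_i$ in a complete multipartite graph, so the paths $u\,w\,u'$ genuinely exist; otherwise nothing is missing.
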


A graph on odd number of vertices is \emph{hypomatchable} if deleting any vertex results in a graph with a perfect matching.
\begin{proposition}[\cite{dvo}, Lemma 8]\label{lemma:edmonds} Fix $t$ and let $H$ be a graph not containing any complete multipartite subgraph with minimum degree at least $t$. 
Suppose that the complement graph $\overline{H}$ of $H$ neither has a perfect matching nor is hypomatchable. 
Then there exist disjoint subsets $W,L$ of $V(H)$ such that
\begin{itemize}
\item $|W|\le t-1$ and $|L|\ge |V(H)|-2|W|$;
\item $f_H(v)\le |W|$ for every $v\in W$; and
\item $uv\in E(H)$ for every $u\in W$ and $v\in L$.
\end{itemize}
\end{proposition}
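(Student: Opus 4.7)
The plan is to apply the Gallai--Edmonds structure theorem to $\HH$ and to read $W$ and $L$ off the resulting decomposition. Write $V(H) = A \cup C \cup D$, where $D$ is the set of vertices of $\HH$ missed by at least one maximum matching, $A = N_{\HH}(D) \setminus D$ is the barrier, and $C$ collects the remaining vertices; let $D_1, \ldots, D_k$ be the components of $\HH[D]$. The structure theorem yields that $\HH[C]$ has a perfect matching, each $D_i$ is hypomatchable, every maximum matching of $\HH$ saturates $A \cup C$ and matches the vertices of $A$ into pairwise distinct components of $\HH[D]$, and the deficiency of $\HH$ equals $k - |A|$. The hypothesis that $\HH$ has no perfect matching then gives $k \ge |A|+1$, and the hypothesis that $\HH$ is not hypomatchable rules out the degenerate case $V(H) = D = D_1$.

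My first attempt is to take $W := A$ together with one representative from every singleton component of $\HH[D]$, and $L := V(D_j)$ for a largest component $D_j$, possibly augmented by $C$. The join requirement that $uv \in E(H)$ for all $u \in W$ and $v \in L$ is automatic for this choice, because no edges of $\HH$ run between distinct components of $\HH[D]$, nor between $D$ and $C$; so the only $\HH$-edges leaving $W$ go into components of $\HH[D]$ that we have discarded. The degree requirement $f_H(v) \le |W|$ is automatic for any singleton vertex added to $W$, since a singleton component $\{v\}$ of $\HH[D]$ satisfies $N_{\HH}(v) \subseteq A \subseteq W$.

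The delicate point is bounding the $\HH$-degrees of the vertices of $A$ by $|W|$ and simultaneously obtaining $|L| \ge |V(H)| - 2|W|$. Here I expect the forbidden-complete-multipartite hypothesis to enter through the following dichotomy. If some $v \in A$ had $\HH$-neighbors spread across many components $D_{i_1}, \ldots, D_{i_r}$, then $v$ together with one representative chosen from each of these components (and possibly a few further vertices of $A$) would exhibit a subgraph of $H$ which is complete multipartite with minimum degree at least $t$, contradicting the hypothesis. Quantifying this argument should force either the existence of such a forbidden subgraph or a decomposition in which every $A$-vertex has few $\HH$-neighbors in $D$ and almost all of $D$ lies in a single large component $D_j$ usable as $L$.

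The main obstacle I foresee is making this trade-off sharp enough to land exactly at $|L| \ge |V(H)| - 2|W|$ while keeping $|W| \le t-1$. Enlarging $W$ to absorb troublesome singleton components or high-degree $A$-vertices relaxes the degree condition but shrinks $L$ and uses up the budget $|W| \le t-1$, whereas keeping $W$ small demands a very tight control of the cross-edges between $A$ and the components of $\HH[D]$. I anticipate that a short induction on $k - |A|$, or a greedy merging argument that replaces pairs of small $D$-components by a single larger factor-critical subgraph, is what would close the gap.
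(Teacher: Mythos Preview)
The paper does not supply its own proof of this proposition; it is quoted verbatim from Dvo\v{r}\'ak and Yepremyan. So there is nothing to compare against, and the question is simply whether your sketch is on the right track.

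Your instinct to apply the Gallai--Edmonds decomposition to $\HH$ is exactly right, and your case analysis ($k>|A|$; the hypomatchable case ruled out) is correct. The gap is in your choice of $W$. You propose $W = A \cup \{\text{singleton components of } \HH[D]\}$ and $L = D_j \cup C$ for the largest component $D_j$, and then assert that ``the only $\HH$-edges leaving $W$ go into components of $\HH[D]$ that we have discarded.'' This is false for the vertices of $A$: by definition $A = N_{\HH}(D)\setminus D$, so every $a\in A$ has at least one $\HH$-neighbour in $D$, and nothing prevents that neighbour from lying in the largest component $D_j\subseteq L$. So the join condition $uv\in E(H)$ for $u\in W$, $v\in L$ can fail as soon as $A$ is put inside $W$. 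The later ``delicate point'' you flag about bounding $f_H$ on $A$ is therefore not the real obstruction; the construction has already broken before you get there. No amount of induction on $k-|A|$ or greedy merging of components will repair this, because the defect is structural: $A$ is precisely the set of vertices with $\HH$-edges into $D$.

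The fix is to discard $A$ entirely and take $W = D\setminus D_1$ (all of the non-largest components, not only the singletons) and $L = D_1\cup C$. Then the join condition is genuinely automatic, since $D_i$ for $i\ge 2$ has no $\HH$-edge to $D_1$ or to $C$. The complete-multipartite hypothesis enters exactly once, and cleanly: the complete $k$-partite subgraph of $H$ with parts $D_1,\dots,D_k$ has minimum degree $|D|-|D_1|$, so $|W| = |D|-|D_1|\le t-1$. For $v\in D_i\subseteq W$ one has $f_H(v)=d_{\HH}(v)\le (|D_i|-1)+|A|$, and since $|A|\le k-1$ and the other $k-2$ components each contribute at least one vertex to $W$, this is at most $|W|$. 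Finally $|L| = |D_1|+|C|$ and $|V(H)|-2|W| = |D_1|+|C|+|A|-(|D|-|D_1|)\le |D_1|+|C|$, again because $|A|\le k-1\le |D|-|D_1|$.
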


Given a multigraph $G$, we say that a vertex $v$ of $G$ can be {well-suppressed} (in $G$) if we can suppress $v$ without creating any new loop or multi-edge in $G$. Precisely, $v$ can be \emph{well-suppressed} if there is a matching of edges of $E_G(v)$ such that 
\begin{itemize}
\item for every pair $\{vu_1,vu_2\}$, we have $u_1\ne u_2$ and $u_1u_2\notin E(G)$, and 
\item for every two pairs $\{vu_1,vu_2\}$ and $\{vu'_1,vu'_2\}$ we have $\{u_1,u_2\}\ne \{u_1',u_2'\}$.
\end{itemize}
A vertex $v$ can be \emph{nearly well-suppressed} if for all edges $e \in E_G(v)$, the vertex $v$ can be well-suppressed after deleting $e$.

Given a simple graph $G$, it is straightforward that if a vertex $v$ can be well-suppressed (nearly well-suppressed), then the complement graph of the induced subgraph $G[N(v)]$ has a perfect matching (is hypomatchable, respectively). The situation is more complex when $G$ is a multigraph. In the next lemma, we consider the case where some multi-edges are allowed.

\begin{lemma}\label{lem:hypo-inside}
Fix $t\ge 1$ and let $G'$ be a loopless multigraph with vertex set $V\cup \{z\}$ (where $z\notin V$) such that for every $v\in V$, $zv$ is either an edge or a multi-edge with multiplicity $2$.
Let $R$ be the set of vertices incident with $z$ by a multi-edge. If 
\begin{itemize}
\item $|V|-2|R|\ge 3t$,
\item $G:=G'[V]$ is simple and does not contains $K_t$ as an immersion, and
\item $z$ cannot be well-suppressed or nearly well-suppresed in $G'$, 

\end{itemize}
then there is a set $W\subseteq V$ such that $|W|\le t-1$ and $f_G(v)\le |W|+|R|$ for every $v\in W$.
\end{lemma}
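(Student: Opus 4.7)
The plan is to reduce the lemma to Proposition~\ref{lemma:edmonds} by encoding the well-suppression of $z$ in $G'$ as a perfect matching in the complement of an auxiliary simple graph $H$, and near-well-suppressions as hypomatchings. Well-suppressing $z$ amounts to finding an $f$-factor of $\overline{G}$ with $f(v)=2$ for $v\in R$ and $f(v)=1$ for $v\in V\setminus R$; the no-repeated-pair constraint of the suppression is automatic, since each edge of the simple graph $\overline{G}$ can be used at most once in a subgraph. The construction of $H$ will then encode this $f$-factor problem as a matching problem amenable to Proposition~\ref{lemma:edmonds}.

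Concretely, I would set $V(H)=V\cup R^*$, where $R^*=\{v^*:v\in R\}$ is a set of $|R|$ fresh vertices playing the role of a ``second copy'' of each $v\in R$, absorbing the second half-edge of the multi-edge $zv$. The edges of $H$ should be arranged so that: (a) $v$ and $v^*$ become true twins in $H$ joined by the edge $vv^*$, which will allow a twin-replacement argument to show that any complete multipartite subgraph of $H$ with minimum degree at least $t$ descends to a complete multipartite subgraph of $G$ of the same minimum degree (contradicting Proposition~\ref{lemma:completemul} and the hypothesis that $G$ has no $K_t$-immersion); (b) perfect matchings of $\overline{H}$ correspond bijectively to $f$-factors of $\overline{G}$ with the prescribed demands.

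The main obstacle will be reconciling (a) and (b). The naive twin construction makes (a) easy but breaks (b), because a perfect matching of $\overline{H}$ might use the pair $\{v,w\}\subseteq R$ twice---once by matching $v$ to $w^*$, and once by matching $v^*$ to $w$---corresponding to using the non-edge $vw$ of $\overline{G}$ twice in the ``$f$-factor'', which is forbidden. I would address this by carefully choosing the edges among $V$--$R^*$ and $R^*$--$R^*$ to forbid such duplications (for instance, by forcing every $v^*\in R^*$ to match only into $V\setminus R$), while still preserving enough of the twin structure to drive the twin-replacement argument for (a). The quantitative bound $|V|-2|R|\ge 3t$ will likely supply the slack needed to carry through this delicate balance, via local alternating-swap arguments that repair duplications.

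Once both hypotheses of Proposition~\ref{lemma:edmonds} are verified (the matching condition via the contrapositive of the above correspondence), the proposition produces disjoint $W_0,L_0\subseteq V(H)$ with $|W_0|\le t-1$ and $f_H(v)\le |W_0|$ for every $v\in W_0$. Setting $W:=W_0\cap V$, we obtain $|W|\le |W_0|\le t-1$, and for each $v\in W$, $f_G(v)\le f_H(v)\le |W_0|=|W|+|W_0\cap R^*|\le |W|+|R|$ since $|W_0\setminus V|\le |R^*|=|R|$, giving the required set $W$.
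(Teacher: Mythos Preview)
Your high-level plan---build an auxiliary simple graph $H$ on $V$ plus one clone per vertex of $R$, encode (near-)well-suppression of $z$ as (hypo)matchings of $\overline{H}$, and invoke Proposition~\ref{lemma:edmonds}---is exactly the paper's approach. The gap is in the part you flag yourself: you cannot simultaneously have true twins and a faithful matching encoding, and your sketch does not actually resolve this tension.

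If $v$ and $v^*$ are true twins joined by an edge, then for $v,w\in R$ with $vw\notin E(G)$ a perfect matching of $\overline{H}$ may contain both $\{v,w^*\}$ and $\{v^*,w\}$, which does \emph{not} yield a well-suppression; so ``$\overline{H}$ has a perfect matching $\Rightarrow$ $z$ well-suppressible'' fails, and you cannot deduce that $\overline{H}$ has no perfect matching. Your proposed repair---forcing each $v^*$ to be adjacent in $H$ to all of $R\cup R^*$---is precisely what the paper does, and it \emph{does} kill duplications, but it destroys the true-twin property: now $v^*$ may have neighbours in $R$ that $v$ lacks, so replacing $v^*$ by $v$ in a complete multipartite subgraph can drop the minimum degree by up to $|R|$. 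Consequently you can only rule out complete multipartite subgraphs of minimum degree at least $t+|R|$, not $t$, and Proposition~\ref{lemma:edmonds} must be applied with parameter $t+|R|$. This yields only $|W_0|\le t+|R|-1$, and your concluding line $|W|\le|W_0|\le t-1$ no longer goes through.

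The paper closes this gap with an extra argument you are missing: it uses the \emph{second} output $L'$ of Proposition~\ref{lemma:edmonds} and the hypothesis $|V|-2|R|\ge 3t$ to show that $L:=L'\cap V$ has size at least $t$; since $W:=W_0\cap V$ is complete to $L$ in $G$, if $|W|\ge t$ then $G$ would contain $K_{t,t}$ and hence (via Proposition~\ref{lemma:completemul}) a $K_t$-immersion. This is what forces $|W|\le t-1$. Your proposal never invokes $L$ and never uses the $3t$ bound in a concrete way; both are essential.
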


\begin{proof}
%For every vertex $v\in R$, we create a \emph{clone} vertex $v_c$, and let $R_c$ be the set of all clone vertices. 
We define an auxiliary (simple) graph $H$ as follows. 
%vertex set $V\cup R_c$ and edge set 
%$$E(H)=E(G)\cup $$
Beginning with $G$, for every vertex $v\in R$, we add a \emph{clone} vertex $v_c$ to $H$ which has the following neighbors: all the vertices of $R$, all the neighbors of $v$ in $G$, and every other clone vertex $u_c$.  
Explicitly, $H$ has vertex set $V \cup \{v_c| v \in R\}$ and edge set 
$$E(H)=E(G) \cup \{u_cv| u,v \in R\}\cup \{u_cv_c| u, v \in R\} \cup \{v_cx| v\in R, vx\in E(G)\} .$$  
Each vertex in $H$ indeed corresponds to an edge of $E_{G'}(z)$, where each clone vertex $v_c$ represents the additional edge in the multi-edge $zv$. 

Let $\HH$ be the complement graph of $H$.
We will show that $\HH$ neither has a perfect matching nor is hypomatchable.
If $\HH$ has a perfect matching, then by the construction of $H$, that perfect matching corresponds to a matching of edges in $E_{G'}(z)$ such that 
\begin{itemize}
\item for every pair $\{zu_1,zu_2\}$, we have $u_1\ne u_2$ and $u_1u_2\notin E(G')$, and 
\item for every two pairs $\{zu_1,zu_2\}$ and $\{zu'_1,zu'_2\}$ we have $\{u_1,u_2\}\ne \{u_1',u_2'\}$.
\end{itemize}
Thus we can we can well-suppress $z$ in $G'$, a contradiction to the third assumption of the lemma.
If $\HH$ is hypomatchable, then for every $v\in V(H)$, there is a perfect matching of $V(H)\backslash \{v\}$ in $\HH$. The same argument shows that $z$ can be nearly well-suppressed in $G'$, a contradiction.
We conclude that $\HH$ neither has a perfect matching nor is it hypomatchable. 

Observe that removing a vertex of a complete multipartite graph with minimum degree $d$ results in a complete multipartite graph with minimum degree at least $d-1$. 
Hence suppose that $H$ contains a multipartite subgraph of minimum degree at least $|R|+t$.
By removing all clone vertices of $H$, we obtain $G$, which still contains a complete multipartite subgraph with minimum degree at least $(|R|+t)-|R|=t$. By Proposition \ref{lemma:completemul}, $G$ contains $K_t$ as an immersion, a contradiction. We conclude that $H$ does not contains any multipartite subgraph of minimum degree at least $|R|+t$.

Applying Proposition \ref{lemma:edmonds} to $H$, we obtain disjoint subsets $W',L'$ of $V(H)$ such that 
\begin{enumerate}[label=(\alph*)]
\item \label{en:3.1} $|W'|\le |R|+t-1$ and $|L'|\ge |V(H)|-2|W'|$;
\item \label{en:3.2} $f_H(v)\le |W'|$ for every $v\in W'$; and
\item \label{en:3.3} $uv\in E(H)$ for every $u\in W'$ and $v\in L'$.
\end{enumerate}
Let $R_c$ be the set of clone vertices of $H$ and $W=W'\backslash R_c$ and $L=L'\backslash R_c$. We will show that $W$ is a desired set.
By \ref{en:3.1} we have
$$|L'|\ge |V(H)|-2|W'|> (|V|+|R|)-2(|R|+t)\ge |V|-|R|-2t.$$
Thus $|L'|-|R|\ge|V|-2|R|-2t$.
Recall from the hypothesis that $|V|-2|R|\ge 3t$, and so $|L|\ge |L'|-|R|\ge t$. Note that by \ref{en:3.3}, $uv\in E(H)$ for every $u\in W$ and $v\in L$, and hence $uv\in E(G)$ for every $u\in W$ and $v\in L$. If $|W|\ge t$, then $G[W\cup L]$ contains a complete bipartite graph with minimum degree at least $t$, and so contains $K_t$ as an immersion by Proposition \ref{lemma:completemul}, a contradiction. Thus it holds that $|W|\le t-1$.

Note that $f_G(v)\le f_H(v)$ since $G$ is an induced subgraph of $H$. It follows from \ref{en:3.2} that $f_G(v)\le f_H(v)\le |W'|\le |W|+|R|$ for every $v\in W$. This completes the proof of the lemma.
\end{proof}

Given an integer $t>1$, we call a graph $t$-deficient if it can be obtained from a graph with minimum degree $t$ by removing a few edges. Precisely, a graph $G$ is \emph{$t$-deficient} if $\sum_{v\in V(G)}\max(0,t-d_{G}(v))<t$.

\begin{proposition}[\cite{dvo}, Lemma 13]\label{lemma:eulerian}
If $G$ is a graph of minimum degree at least $7t+7$ that does not contain an immersion of $K_t$, then $G$ contains an immersion of some $7t$-deficient eulerian graph $G'$.
\end{proposition}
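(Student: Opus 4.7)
The plan is to modify $G$ via immersion-preserving operations---edge deletion together with splitting off pairs of edges---in order to produce a multigraph $G'$ that is eulerian and satisfies the deficiency bound $\sum_{v\in V(G')}\max(0,7t-d_{G'}(v))<7t$. Since immersions compose, any such $G'$ will automatically be immersed in $G$. Moreover, any eulerian graph of minimum degree at least $7t$ is trivially $7t$-deficient (the sum is zero), so the essential task is to even out the parities of $G$ while losing only a bounded amount from each vertex degree.

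The first step is a parity adjustment. Let $T\subseteq V(G)$ be the set of vertices of odd degree in $G$; note that $|T|$ is even. I would select a $T$-join $J\subseteq E(G)$---that is, a subgraph in which the vertices of odd degree are exactly those in $T$---and set $G':=G-J$. Then every vertex of $G'$ has even degree, so $G'$ is eulerian, and $d_{G'}(v)=d_G(v)-d_J(v)\ge 7t+7-d_J(v)$. Thus each vertex $v$ contributes at most $\max(0,d_J(v)-7)$ to the deficiency sum.

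The core of the argument is therefore to choose $J$ so that $\sum_v \max(0,d_J(v)-7)<7t$. The ideal scenario is to take $J$ to be a matching of $T$ inside $G$, in which case $d_J(v)\le 1$ for every $v$ and the total deficiency vanishes. When such a matching does not exist, I would invoke Tutte's condition: there is a small set $S$ whose removal creates many odd components that obstruct a perfect matching on $T$. The density hypothesis $d_G(v)\ge 7t+7$ would then force these components to be small and densely connected to the rest of $G$, allowing me to reroute pieces of the $T$-join along short internally-disjoint paths so that $d_J(v)$ stays bounded at every vertex.

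The main obstacle I anticipate is controlling concentration of $J$ at ``bottleneck'' vertices: if many $T$-join paths pass through a single vertex $v$, the contribution $d_J(v)-7$ could blow up. This is where the hypothesis that $G$ contains no $K_t$ immersion should enter crucially. A concentration of routed paths at $v$ would force a dense neighborhood around $v$ with small total missing degree, from which Lemma \ref{lemma:average1} or Lemma \ref{lemma:average2} would produce a $K_t$ immersion---contradicting the hypothesis. Balancing the two regimes (few bottleneck vertices on the one hand, enough slack in the $+7$ of the minimum-degree assumption on the other) is where I expect the numerical bookkeeping to become tight, and the fact that the conclusion requires a strict inequality $<7t$ rather than $\le 7t$ suggests that the proof exploits precisely this surplus of $7$ per vertex.
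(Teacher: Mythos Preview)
The paper does not give its own proof of this proposition; it is quoted from Dvo\v{r}\'ak and Yepremyan \cite{dvo} (their Lemma~13) and used here as a black box. There is therefore no in-paper argument to compare your attempt against.

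On your plan itself: deleting a $T$-join to even out all parities is the natural opening move, and your reduction to bounding $\sum_v \max(0, d_J(v)-7)$ is correct. But the proposal turns into a sketch precisely where the real work lies, and two genuine gaps stand out. First, in the non-matching case you invoke a Tutte obstruction and then assert that high minimum degree forces the odd components to be ``small and densely connected,'' permitting rerouting along ``short internally-disjoint paths''; neither claim is substantiated, and no concrete rerouting scheme is given. Second, and more seriously, your proposed use of the no-$K_t$-immersion hypothesis via Lemmas~\ref{lemma:average1}--\ref{lemma:average2} to rule out concentration of $J$ at a bottleneck vertex has no visible mechanism: a vertex $v$ can have arbitrarily large $d_J(v)$---many edge-disjoint $T$-join paths passing through it---without $N(v)$ having small total missing degree in $G$, so there is no evident way to extract a $K_t$ immersion from high $d_J(v)$ alone. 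The chain ``concentration $\Rightarrow$ dense neighbourhood $\Rightarrow$ $K_t$ immersion'' does not hold as stated.

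In short, the outline is reasonable as a first approximation, but both key steps are at the level of hope rather than argument, and the appeal to Lemmas~\ref{lemma:average1}--\ref{lemma:average2} looks misplaced for this particular parity-fixing reduction.
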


\begin{proposition}[\cite{dvo}, Lemma 15]\label{lemma:eulerian2}
Every $7t$-deficient eulerian graph contains a vertex of degree at least $7t$.
\end{proposition}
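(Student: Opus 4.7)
The plan is to argue by contradiction using only the trivial simple-graph bound $d(v) \leq n-1$ together with the definition of $7t$-deficient; the Eulerian hypothesis plays essentially no role beyond ensuring degrees are integers. Suppose $G$ has $n$ vertices and that $d(v) \leq 7t - 1$ for every $v \in V(G)$. The goal is to show the total deficiency $\sum_v \max(0, 7t - d(v))$ is at least $7t$, contradicting the $7t$-deficient hypothesis.

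Since $G$ is simple, $d(v) \leq n - 1$ always, so combined with our assumption the per-vertex deficiency $7t - d(v)$ is at least $\max(1,\, 7t - n + 1)$. I split on the size of $n$. If $n \leq 7t$, each vertex contributes at least $7t - n + 1$ to the deficiency, so
\[
\sum_{v \in V(G)} \max(0, 7t - d(v)) \;\geq\; n(7t - n + 1) \;=\; 7t + (n-1)(7t - n) \;\geq\; 7t,
\]
where the factorization is obtained by direct expansion and the nonnegativity holds for $1 \leq n \leq 7t$. If instead $n \geq 7t$, then every vertex contributes at least $1$, so the total deficiency is at least $n \geq 7t$.

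In either case the total deficiency is at least $7t$, contradicting the assumption that it is strictly less than $7t$. Hence some vertex of $G$ must have degree at least $7t$.

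This is a short counting argument rather than a structural one, so there is no real obstacle; the only thing to notice is the factorization $n(7t - n + 1) - 7t = (n-1)(7t - n)$, which disposes of the small-$n$ regime uniformly without a per-value check. (One can observe along the way that the Eulerian condition is not actually needed for the bound, so the same argument would give a vertex of degree at least $7t$ in any $7t$-deficient simple graph.)
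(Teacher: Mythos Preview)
Your argument is correct. Note, however, that the paper does not supply its own proof of this proposition: it is quoted verbatim as Lemma~15 of Dvo\v{r}\'ak and Yepremyan~\cite{dvo} and used as a black box. So there is nothing in the paper to compare against beyond the bare statement. Your counting argument is a perfectly good self-contained proof, and your side observation that the Eulerian hypothesis is not actually used is accurate: the bound $\sum_v \max(0,7t-d(v)) \ge 7t$ under the assumption $d(v)\le 7t-1$ for all $v$ holds for any simple graph on at least one vertex, via exactly the factorization $n(7t-n+1)=7t+(n-1)(7t-n)$ you wrote down.
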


The main technical step in the proof of Theorem \ref{theorem:mindeg} is the following lemma.  Dvo\v{r}\'ak and Yepremyan proved a similar result for $11t + 7$-deficient eulerian graphs in \cite{dvo}.
\begin{lemma}\label{lemma:mainmindeg}
Every $7t$-deficient eulerian graph contains an immersion of $K_t$.
\end{lemma}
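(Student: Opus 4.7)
The plan is to argue by induction on $|V(G)|$. Let $G$ be a minimum counterexample: a $7t$-deficient eulerian multigraph of minimum vertex count that contains no immersion of $K_t$. Preliminary reductions allow us to assume $G$ is loopless (loops do not affect immersions) and that $|V(G)|$ is sufficiently large in terms of $t$: otherwise, Proposition~\ref{lemma:eulerian2} together with the $7t$-deficient hypothesis forces the average missing degree $\gamma$ of $G$ to satisfy $\gamma \le |V(G)|/2$, and Lemma~\ref{lemma:average2} yields a $K_t$ immersion directly, a contradiction.

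Apply Proposition~\ref{lemma:eulerian2} to select $z \in V(G)$ with $d_G(z) \ge 7t$; since $G$ is eulerian this degree is even. Set $V = N_G(z)$ and let $R \subseteq V$ be the set of vertices joined to $z$ by a multi-edge. First, try to well-suppress $z$. If this succeeds, the resulting multigraph $G''$ has $|V(G)|-1$ vertices, is loopless (by the well-suppression condition) and eulerian (the parities of the degrees of vertices $v \ne z$ are preserved under splitting off), remains $7t$-deficient (those degrees are in fact unchanged), and contains no $K_t$ immersion (such an immersion would lift through the split-offs back to $G$); this contradicts minimality. Hence $z$ cannot be well-suppressed, and since $d_G(z)$ is even, nearly well-suppressing is vacuously impossible as well. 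After arranging that $G[V]$ is simple (via a preliminary reduction on multi-edges disjoint from $z$, handled inductively on the number of such multi-edges) and verifying $|V| - 2|R| \ge 3t$ (which follows from $|V(G)|$ large and $|R| \le d_G(z)/2$), Lemma~\ref{lem:hypo-inside} supplies a set $W \subseteq V$ with $|W| \le t-1$ and $f_G(v) \le |W| + |R|$ for every $v \in W$.

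To finish, construct a set $M \subseteq V(G)$ of size $t$ matching the hypothesis of Lemma~\ref{lemma:average1}: take $M = W \cup \{z\}$ augmented by the $t - |W| - 1$ vertices of $V(G) \setminus (W \cup \{z\})$ of smallest missing degree. Bound $\sum_{v\in M} f_G(v)$ by summing: (i) the local contribution from $W$, at most $|W|(|W|+|R|)$ by Lemma~\ref{lem:hypo-inside}; (ii) the contribution $f_G(z) \le |V(G)|-1-d_G(z)+|R|$; and (iii) the contribution from the augmenting vertices, controlled by the global bound $\sum_{v} \max(0, 7t - d_G(v)) < 7t$ from $7t$-deficiency (which forces all but at most a constant number of vertices to have small missing degree). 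Comparing the resulting estimate to $(n - t - \max_{v\in M} f_G(v))t$ and invoking Lemma~\ref{lemma:average1} produces a $K_t$ immersion in $G$, the desired contradiction.

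The main obstacle is this closing calculation. It is precisely the $-\max_{v \in M} f_G(v)$ correction in the right-hand side of Lemma~\ref{lemma:average1} (absent from the analogous dense-graph lemma in \cite{dvo}) that supplies the slack needed to push the constant from $11$ down to $7$, and the bookkeeping must carefully balance the extreme cases $|R| = 0$ and $|R|$ close to $d_G(z)/2$ to achieve the tight bound. A subsidiary obstacle is the preliminary step of reducing to the situation where $G[V]$ is simple, which requires a secondary induction or a local modification argument to avoid spoiling the $7t$-deficient property.
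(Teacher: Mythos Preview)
The closing calculation has a genuine gap. The $7t$-deficient hypothesis bounds \emph{degrees}: $\sum_v \max(0, 7t - d_G(v)) < 7t$ forces most vertices to satisfy $d_G(v) \ge 7t$. But Lemma~\ref{lemma:average1} needs control on \emph{missing degrees} $f_G(v) = |V(G)| - 1 - d_G(v)$, and when $|V(G)|$ is large every vertex has large missing degree regardless of deficiency. Your assertion that $7t$-deficiency ``forces all but at most a constant number of vertices to have small missing degree'' confuses degree deficit with missing degree; for large $|V(G)|$ there is no way to bound $\sum_{v \in M} f_G(v)$ or $\max_{v \in M} f_G(v)$ in the full graph $G$, and the appeal to Lemma~\ref{lemma:average1} collapses.

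The paper circumvents this by applying Lemma~\ref{lemma:average1} only to an induced subgraph $H = G[U]$ with $|U| \approx 7t$ contained (essentially) in the neighbourhood of a high-degree vertex $z_1$, where missing degrees are automatically bounded. To assemble $t$ vertices of $U$ with small $f_H(\cdot)$, a single application of Lemma~\ref{lem:hypo-inside} is not enough, since the set $W$ it returns carries no lower bound on $|W|$ and may be tiny. The paper instead iterates: it builds $A = \{z_1, \dots, z_p\}$ greedily, attempts to suppress $z_p, \dots, z_{q+1}$ in order while tracking the multi-edges this creates, applies Lemma~\ref{lem:hypo-inside} at the first $z_q$ where suppression fails, and then proves $|W| \ge t - p$, so that $A$ together with a subset of $W$ supplies the needed $t$ vertices inside $U$. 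Your one-shot suppression of a single $z$ recovers neither the restriction to a small induced subgraph nor the iteration needed to grow $M$ to size $t$. (A side remark: the lemma is about simple graphs, so initially $R = \emptyset$ and there are no multi-edges to reduce away; multi-edges enter only because suppression may create them, which is exactly why the paper's bookkeeping is so elaborate.)
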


Theorem \ref{theorem:mindeg} follows easily from Lemma \ref{lemma:mainmindeg}. 
Suppose for a contradiction that there exists a graph $G$ of minimum degree at least $7t+7$ and does not have an immersion of $K_t$. 
By Proposition \ref{lemma:eulerian}, $G$ contains an immersion of a $7t$-deficient eulerian graph $G'$. 
By Lemma \ref{lemma:mainmindeg}, $G'$ contains an immersion of $K_t$, a contradiction.

\begin{proof}[Proof of Lemma \ref{lemma:mainmindeg}]
Suppose that there exists a $7t$-deficient simple eulerian graph which does not contain an immersion of $K_t$. Let $G=(V,E)$ be such a graph with as few vertices as possible. The idea of the proof is as follows. If $G$ has few edges, we show it would be possible to well-suppress some vertex of $G$ to get a smaller counterexample, a contradiction. Hence $G$ has many edges. We are then able to find in $G$ two disjoint sets of vertices $A$ and $B$ of size around $t$ and $6t$, respectively, such that there are very few missing edges between $A$ and $B$.  We apply Lemma \ref{lemma:average1} to obtain an immersion of $K_t$ and so reach a contradiction.
 
Let $z_1$ be a vertex in $G$ with $d(z_1)\ge 7t$, as guaranteed by Proposition \ref{lemma:eulerian2}. 
Let $1\le p< t$ be the maximum integer such that there exists an ordered set $A=\{z_1,z_2,...,z_p\}$ satisfying
\begin{equation}
f(z_i|B)\le p+i+r_i,\  \text {for all }  i\ge 2. \label{equation:induction}
\end{equation}
where $B=N(z_1)\backslash A$ and $r_i=\big|\{j\le i: z_j\notin N(z_1)\}\big|$ for every $i\ge 2$. 
Such number $p$ clearly exists since \eqref{equation:induction} trivially holds for $A=\{z_1\}$.
Since $|N(z_1)\cap A|=p-r_p$, we have

\begin{equation}\label{eq:B}
|B|=|N(z_1)\backslash A|=d(z_1)-|N(z_1)\cap A|\ge 7t-p+r_p.
\end{equation}

Let $\Aa=V\backslash A$.
Starting with $G_p = G$, we will attempt to sequentially split off the vertices of $A$ in order $z_p, z_{p-1},\dots, z_1$ to create graphs $G_{p-1},G_{p-2}, \dots, G_0$. At each step, if we could find the complement of a perfect matching in $N_{G_i}(z_i)$, we could split off $z_i$ to obtain $G_{i-1}$ and maintaining the property that $G_{i-1}$ is simple.  However, the requirement that $N_{G_i}(z_i)$ have the complement of a perfect matching is too strong and so we will have to slightly relax it.  In doing so, we will need to introduce parallel edges into the graphs $G_i$, but we will want to do so in a tightly controlled manner.  This leads us to the following definition.

 Fix $q$, $0 \le q \le p$ and multigraphs $G_i$, $q \le i \le p$ which satisfy the following.
\begin{enumerate}[label=(\roman*)]
\item \label{en:e.1} $G_p =G$ and for all $i$, $q \le i < p$, $G_i$ is obtained from $G_{i+1}$ by suppressing $z_{i+1}$.
\item  \label{en:e.2} For all $i$, $q \le i \le p$, $G_i[\Aa]$ is simple.
\item  \label{en:e.3} For all $i$, every multi-edge of $G_i$ with an endpoint in $\Aa$ has multiplicity 2.
\item  \label{en:e.5}  For all $j,2\le j \le q$, there are at most $r_p - r_q$ multi-edges from $z_j$ to vertices of $\Aa$ in $G_q$, and there are at most $p - q$ multi-edges from $z_1$ to vertices of $\Aa$ in $G_q$,
\item  \label{en:e.4} There are at least $|\Aa|-p+q$ vertices in $\Aa$ not incident with any multi-edge in any $G_i,q \le i \le p$.

\item  \label{en:e.6} Given $v\in \Aa$ and $z\in A$, if $vz$ is a multi-edge in some $G_i,q\le i
\le p$, then for every $z'\in A, z'\ne z$ and every $j, q\le j \le p$, $vz'$ is not a multi-edge in $G_j$.

\item  \label{en:e.7}  Subject to \ref{en:e.1} -- \ref{en:e.6}, we choose $q$ and $G_i$, $q \le i \le p$ to minimize $q$.
\end{enumerate}
Such a number $q$ and multigraphs $G_i$, $q \le i \le p$ trivially exist, given the observation that $q = p$ and $G_p = G$ satisfy \ref{en:e.1} -- \ref{en:e.6} as $G$ is simple. 

We begin with the observation that $q>0$.  Otherwise, the graph $G_0$ does not contain $K_t$ as an immersion because $G_0$ itself immerses in $G$ by construction.  Moreover, $G_0$ is simple by \ref{en:e.2}, and for all $v \in V(G_0)$, $d_{G_0}(v) = d_{G}(v)$.  We conclude that $G_0$ is both eulerian and $t$-deficient, contrary to our choice of $G$ to be a counterexample on a minimum number of vertices.  

We now consider the graph $G_q$ and keep in mind that by the minimality of $q$ in \ref{en:e.7}, we cannot supress $z_q$ to obtain $G_{q-1}$ which satisfies all \ref{en:e.1} -- \ref{en:e.6}. 
Let $X=N_{G_{q}}(z_q)\cap \Aa$. We will show that $G':=G_q[X\cup\{z_q\}]$ satisfies all hypotheses of Lemma \ref{lem:hypo-inside}.
From \ref{en:e.2} and \ref{en:e.3}, we have $G'$ is a loopless multigraph with vertex set $X\cup \{z_q\}$ such that for every $v\in X$, $z_qv$ is either an edge or a multi-edge with multiplicity 2. Let $R$ be the set of vertices in $X$ incident with $z_q$ by a multi-edge. Then by \ref{en:e.5} we have
\begin{equation} \label{eq:R}
\left\{ \begin{array}{ll}
|R|\le p-1 \ \ \ \ \ \  \ \ \ \text{ if } q=1,\\
|R|\le r_p-r_q \ \ \ \ \ \ \ \text{ if } q>1. \end{array} \right. 
\end{equation}

\begin{claim}
$G'$ satisfies all hypotheses of Lemma \ref{lem:hypo-inside}.
\end{claim}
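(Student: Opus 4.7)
The plan is to check the three nontrivial hypotheses of Lemma \ref{lem:hypo-inside} applied to $G'$, with $z_q$ playing the role of $z$ and $X$ playing the role of the inner vertex set; the preliminary structural conditions (looplessness, every edge $z_qv$ for $v\in X$ having multiplicity $1$ or $2$, and $R$ being exactly the doubled neighbours) fall out of \ref{en:e.2}, \ref{en:e.3} and the definition of $R$. So what I must verify is (i) $|X|-2|R|\ge 3t$, (ii) $G_q[X]$ is simple and admits no $K_t$ immersion, and (iii) $z_q$ admits neither a well-suppression nor a nearly well-suppression in $G'$.

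For (i), the key observation is a monotonicity of the $\Aa$-neighbourhood of $z_q$: each suppression used to pass from $G$ to $G_q$ only deletes or rewires edges that were incident to the vertex being suppressed, and every such vertex is some $z_j\in A\setminus\{z_q\}$, so every edge $z_qu$ of $G$ with $u\in\Aa$ survives into $G_q$. Hence $|X|\ge |N_G(z_q)\cap\Aa|\ge |N_G(z_q)\cap B|$. For $q\ge 2$, combining \eqref{equation:induction} with \eqref{eq:B} yields $|N_G(z_q)\cap B|\ge 7t-2p-q+r_p-r_q$; plugging in $|R|\le r_p-r_q$ from \eqref{eq:R} and the trivial bound $r_p-r_q\le p-q$, a short arithmetic reduces $|X|-2|R|$ to at least $7t-3p$, and $p<t$ makes this $>4t$. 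For $q=1$, the choice $d_G(z_1)\ge 7t$ gives $|X|\ge|B|\ge 7t-p+r_p$, and \eqref{eq:R} gives $|R|\le p-1$, whence $|X|-2|R|\ge 4t+6$. Either way the bound $3t$ is beaten by a wide margin. For (ii), $X\subseteq\Aa$ so simplicity of $G_q[X]$ is inherited directly from \ref{en:e.2}; and since $G_q$ is obtained from $G$ by a sequence of splittings and immersion is transitive, any $K_t$ immersion in $G_q[X]\subseteq G_q$ would lift to a $K_t$ immersion in $G$, contradicting the choice of $G$ as a minimum counterexample.

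The real work is in (iii), which I plan to do contrapositively: assume $z_q$ admits a well-suppression or a nearly well-suppression in $G'$ and construct a graph $G_{q-1}$ satisfying \ref{en:e.1}--\ref{en:e.6}, contradicting the minimality of $q$ in \ref{en:e.7}. To suppress $z_q$ in the full graph $G_q$ I have to pair all of $E_{G_q}(z_q)$, not only the $\Aa$-side $E_{G'}(z_q)$; my pairing will apply the given (nearly) well-suppression on the $\Aa$-side, and pair the $A$-side edges $z_qz_i$ ($i<q$) among themselves. Eulerianness of $G$ is preserved under suppression, so $d_{G_q}(z_q)$ is even and the two sides have matching parity; in the even-parity case the two self-pairings suffice and every new edge lies either strictly inside $\Aa$ (loop- and multi-edge-free by the well-suppression) or strictly inside $A$ (invisible to \ref{en:e.2}, \ref{en:e.3}, \ref{en:e.5}), and \ref{en:e.2}--\ref{en:e.6} transfer from $G_q$ to $G_{q-1}$ at once. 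In the odd-parity case the nearly well-suppression leaves exactly one $\Aa$-edge $z_qx$ unmatched, and I mate it with some $z_qz_i$ on the $A$-side, producing a single new edge $xz_i$ from $\Aa$ to $A$ that must be absorbed into the multi-edge budgets of \ref{en:e.5} and into the ``clean vertex'' count of \ref{en:e.4}.

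The main technical obstacle is this last absorption: the slack in those budgets between index $q$ and index $q-1$ is only one unit, which is exactly the room needed to accommodate the single new multi-edge, but only if the new edge goes to the right place. The flexibility to choose which $\Aa$-edge is left out (granted by the word ``nearly'') together with the freedom to choose which $z_i$ it is mated with, using $z_1$ in particular when its budget in \ref{en:e.5} is not saturated, lets me steer the new edge into a slot where the budget has room; a parallel and easier check handles \ref{en:e.4} and \ref{en:e.6}. With that bookkeeping complete, $G_{q-1}$ satisfies \ref{en:e.1}--\ref{en:e.6}, contradicting the minimality of $q$ and giving (iii).
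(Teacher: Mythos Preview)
Your overall architecture matches the paper's exactly: verify $|X|-2|R|\ge 3t$ via the monotonicity of the $\Aa$-neighbourhood of $z_q$ together with \eqref{equation:induction}, \eqref{eq:B}, \eqref{eq:R}; get simplicity and the non-immersion of $K_t$ from \ref{en:e.2} and the transitivity of immersion; and argue (iii) by contraposition, splitting into the well-suppressed (even) and nearly-well-suppressed (odd) cases and building $G_{q-1}$ in each. Parts (i) and (ii) are fine (your arithmetic for (i) is even a touch sharper than the paper's, modulo an inconsequential off-by-one in the $q=1$ case).

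The gap is in the bookkeeping for (iii) in the odd-parity case. Your assertion that ``the slack in those budgets between index $q$ and index $q-1$ is only one unit'' is not uniformly true: for $z_1$ the budget jumps from $p-q$ to $p-(q-1)$, giving one unit of slack always; but for $z_j$ with $j\ge 2$ the budget goes from $r_p-r_q$ to $r_p-r_{q-1}$, and this is \emph{zero} slack when $z_q\in N_G(z_1)$. So ``steering the new edge into a slot where the budget has room'' is not a matter of picking $z_1$ when it happens to be unsaturated---the real dichotomy is whether $z_1z_q\in E(G_q)$. The paper resolves this by taking $s$ minimal with $z_sz_q\in E(G_q)$: if $s=1$ the $z_1$-budget absorbs the new edge; if $s>1$ then $z_1z_q\notin E(G_q)$, hence $z_1z_q\notin E(G)$ (suppressions only add edges), hence $z_q\notin N(z_1)$, hence $r_{q-1}=r_q-1$ and the $z_j$-budget gains the needed unit. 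Your sketch does not contain this observation, and without it \ref{en:e.5} cannot be verified.

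Similarly, ``a parallel and easier check handles \ref{en:e.4} and \ref{en:e.6}'' hides a real choice: you must use the flexibility in ``nearly'' to select the left-out vertex $v\in X$ to be one that is incident with no multi-edge in any $G_i$, $q\le i\le p$ (a clean vertex). That such a clean $v$ exists in $X$ needs the count from \ref{en:e.4} together with the bound $|X|\ge 3t>p-q$ you already have from (i). With $v$ clean, \ref{en:e.3}, \ref{en:e.4} and \ref{en:e.6} go through; without specifying this choice, \ref{en:e.3} and \ref{en:e.6} can fail.
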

\begin{cproof}
We verify the hypotheses one by one.

\begin{itemize}
\item $G'[X]=G_q[X]$ is simple and does not contains $K_t$ as an immersion.
\end{itemize}
$G_q[X]$ is simple by \ref{en:e.2}, and does not contains $K_t$ as an immersion by \ref{en:e.1} and the assumption that $G$ does not contains $K_t$ as an immersion.

\begin{itemize}
\item $|X|-2|R|\ge 3t$.
\end{itemize}
To prove $|X|-2|R|\ge 3t$, note that $|B\backslash X|$ is the number of vertices in $B$ not adjacent to $z_q$ in $G_q$, which is at most the number of vertices in $B$ not adjacent to $z_q$ in $G$ since no edge between $z_q$ and $B$ have been removed in suppressing $z_p, \dots, z_{q+1}$. Thus $|B\backslash X|\le f_G(z_q|B)$. Combining with \eqref{equation:induction} we have
\begin{equation} \label{eq:BX}
\left\{ \begin{array}{ll}
|B\backslash X|=0 \ \ \ \ \ \ \ \ \ \ \ \ \ \ \text{ if } q=1,\\
|B\backslash X|\le p+q+r_q \ \ \ \text{ if } q>1. \end{array} \right. 
\end{equation}

In the case $q>1$, by \eqref{eq:B},
$$|X|\ge |B|-|B\backslash X|\ge (7t-p+r_p)-(p+q+r_q).$$
From the fact that $t\ge \max(p,q,r_p)$ and \eqref{eq:R}, we have
$$|X|-2|R|\ge 7t-2p-q-r_p+r_q\ge 3t.$$

In the case $q=1$, by \eqref{eq:B},
$$|X|\ge |B|-|B\backslash X|\ge |B|\ge 7t-p+r_p.$$ Hence from \eqref{eq:R} we have $|X|-2|R|\ge 7t-3p+r_p\ge 3t$.

\begin{itemize}
\item $z_q$ cannot be well-suppressed or nearly well-suppressed in $G'$.
\end{itemize}
Suppose that $z_q$ can be well-suppressed in $G'$. We first split off all edges from $z_q$ to $X$ by that matching. Then there are even number of edges incident with $z_q$ remaining in $G_q$, all from $z_q$ to $A$ since $X=N_{G_{q}}(z_q)\cap \Aa$. We now suppress $z_q$ in $G_q$ arbitrarily to obtain $G_{q-1}$. Since we do not create any new edge between $A$ and $\Aa$, \ref{en:e.1} -- \ref{en:e.6} hold trivially for $G_{q-1}$, which contradicts \ref{en:e.7}.

As the second case, suppose that $z_q$ can be nearly well-suppressed in $G'$. Pick a vertex $v\in X$ which is not incident with any multi-edge in $G_i$, for all $q \le i \le p$. 
Such vertex $v$ exists since by \ref{en:e.4}, there was at most $p-q$ distinct vertices of $\overline{A}$ incident with some multi-edge over all $G_i$, $q \le i \le p$, while $|X|\ge 3t>p-q$ (as we show above that $|X|-2|R|\ge 3t)$. 

Since $z_q$ is can be nearly well-suppressed in $G'$, if we remove the edge $z_qv$ in $G'$, we can well-suppress $z_q$ (in $G'$), and we do so. 
Since $d_{G_q}(z_q)$ is even, $z_q$ must be adjacent to some vertex $z_s$ with $s<q$. We choose such $s$ as small as possible and split off $z_sz_qv$. We now suppress $z_q$ in $G_q$ arbitrarily to obtain $G_{q-1}$ and will show that $G_{q-1}$ satisfies  \ref{en:e.1} -- \ref{en:e.6} and hence violates \ref{en:e.7}. 
Properties \ref{en:e.1} and \ref{en:e.2} hold trivially. The only possible new multi-edge that we have created is $z_sv$. Since $v$ is not incident with any multi-edge in $G_i$ for all $q \le i \le p$, \ref{en:e.3}, \ref{en:e.4} and \ref{en:e.6} hold for $G_{q-1}$. 
To prove \ref{en:e.5}, first observe that \ref{en:e.5} clearly holds if $z_s=z_1$. If $z_s\ne z_1$, then $z_1$ is not incident with $z_q$ by the choice of $s$, and so $r_{q-1}= r_{q}-1$ by the defintion of function $r$.
Thus $r_p - r_{q-1}= r_p - r_q +1$ and therefore \ref{en:e.5} holds. 
\end{cproof}

Hence $G'$ satisfies the hypotheses of Lemma \ref{lem:hypo-inside}, and so there is a set $W\subseteq X$ such that $|W|\le t-1$ and $f_{G_q[X]}(v)\le |W|+|R|$ for every $v\in W$. 

We next show that $|W|\ge t-p$. To do so, we need the following claim.
\begin{claim}\label{cl:W-main}
$f_G(v|B)\le |W|+2p+r_p$ for every $v\in W$.
\end{claim}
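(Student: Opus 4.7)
The plan is to split $B$ into $B\cap X$ and $B\setminus X$ and bound the non-neighbors of $v$ in each piece, leveraging the fact that $G_q$ and $G$ differ only by edges added during the suppressions of $z_{q+1},\dots,z_p$. For $B\setminus X$, a vertex $u\in B$ is missing from $X$ precisely when $u$ is not a neighbor of $z_q$ in $G_q$. Since suppression only adds (never deletes) edges incident with $z_q$, any such $u$ is also a non-neighbor of $z_q$ in $G$, so $|B\setminus X|\le f_G(z_q|B)$. For $q\ge 2$ the defining inequality \eqref{equation:induction} gives $|B\setminus X|\le p+q+r_q$; for $q=1$ the same monotonicity shows $B\subseteq N_{G_1}(z_1)\cap\Aa=X$, so $|B\setminus X|=0$.

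For the non-neighbors in $B\cap X$, I use $f_G(v|B\cap X)\le f_G(v|X)$ and then compare $f_G(v|X)$ with $f_{G_q[X]}(v)$, which Lemma \ref{lem:hypo-inside} bounds by $|W|+|R|$. Because edges inside $\Aa$ are never destroyed during suppression, every non-neighbor of $v$ in $X$ in $G$ is either a non-neighbor in $G_q$ or a ``new'' neighbor created by a suppression. Letting $S$ denote the set of vertices of $\Aa$ that are $G_q$-neighbors but not $G$-neighbors of $v$, this gives $f_G(v|X)\le f_{G_q[X]}(v)+|S|$.

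The key technical step is the bound $|S|\le p-q$. It comes from degree conservation under suppression, $d_G(v)=d_{G_q}(v)$. Writing each side as the sum of $\Aa$-neighbors and edges to $A$ counted with multiplicity yields
\[|S|=|N_G(v)\cap A|-\sum_{j\le q}\mathrm{mult}_{G_q}(vz_j).\]
Any edge $vz_j$ with $j\le q$ present in $G$ survives in $G_q$ (neither endpoint is suppressed), contributing at least $1$ to the sum, so $|S|$ is bounded by the number of $j>q$ with $vz_j\in E(G)$, hence by $p-q$. A naive argument using only $\mathrm{mult}_{G_q}(z_iv)\le 2$ would yield $2(p-q)$, which falls just short of what is needed.

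Combining these ingredients with the bound on $|R|$ in \eqref{eq:R} closes the argument: for $q\ge 2$, $f_G(v|B)\le\bigl(|W|+(r_p-r_q)+(p-q)\bigr)+(p+q+r_q)=|W|+2p+r_p$; for $q=1$, $f_G(v|B)\le\bigl(|W|+(p-1)+(p-1)\bigr)+0\le |W|+2p+r_p$ since $r_p\ge 1$ (the index $j=1$ always lies in the set defining $r_i$).
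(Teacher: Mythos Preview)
Your proof is correct. The overall architecture matches the paper's: split $B$ according to $X$, invoke the bounds on $|B\setminus X|$ and $|R|$ from \eqref{eq:R} and \eqref{eq:BX}, and then control the discrepancy between non-adjacency in $G$ versus in $G_q$. Where you genuinely diverge is in this last step. The paper tracks, suppression by suppression, how many new $v$--$B$ edges can appear, invoking property~\ref{en:e.6} to argue that at most one $z_i$ with $q<i\le p$ has $\text{mult}_{G_i}(z_iv)=2$ and summing the contributions. Your route instead exploits the single global identity $d_G(v)=d_{G_q}(v)$ together with the simplicity of $G_q[\Aa]$ (property~\ref{en:e.2}) to obtain $|S|=|N_G(v)\cap A|-\sum_{j\le q}\text{mult}_{G_q}(vz_j)\le p-q$ directly. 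This is more economical: it bypasses property~\ref{en:e.6} entirely and yields the sharp bound $p-q$ without any step-by-step bookkeeping. One small cosmetic point: in the $q=1$ case you only need $r_p\ge 0$ (trivial), not $r_p\ge 1$, to conclude $|W|+2p-2\le |W|+2p+r_p$; your stronger observation that $z_1\notin N(z_1)$ forces $r_p\ge 1$ is correct but unnecessary.
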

\begin{cproof}
We first show that $f_{G_q}(v|B)\le|W|+p+q+r_p.$
Note that $f_{G_q}(v|X)= f_{G_q[X]}(v)$ for every $v\in X$, and so
$$f_{G_q}(v|B) \le f_{G_q}(v|X)+f_{G_q}(v|B\backslash X)\le (|W|+|R|)+|B\backslash X|.$$ 

If $q>1$, recall that $|B\backslash X|\le p+q+r_q$ from \eqref{eq:BX} and $|R|\le r_p-r_q$ from \eqref{eq:R}. Hence we have 
$$f_{G_q}(v|B)\le (|W|+r_p-r_q)+(p+q+r_q)\le |W|+p+q+r_p.$$

If $q=1$, recall that $|B\backslash X|=0$ from \eqref{eq:BX} and $|R|\le p-1$ from \eqref{eq:R}. Thus we have 
$$f_{G_q}(v|B)\le |W|+p<|W|+p+q+r_p.$$

We conclude that $f_{G_q}(v|B)\le|W|+p+q+r_p$ in all cases. To complete the claim, it suffices to
show that $$f_G(v|B)\le f_{G_q}(v|B)+(p-q)$$ for every $v\in X$. Fix $v\in X$. By property \ref{en:e.6}, there exists a value $s$ such that $z_{i}v$ is not a multi-edge in $G_i$ for every $i\ne s,q\le i\le p$.
Thus for every $i\ne s, q< i\le p$, there is at most one edge $z_iv$ in $G_i$, and so when we supress $z_i$ in $G_i$ to obtain $G_{i-1}$, we add at most one edge between $v$ and $B$ into $G_{i-1}$.
If $s>q$, note that there are at most two edges $z_sv$ in $G_s$ by property \ref{en:e.3}. Hence when we supress $z_s$ in $G_s$ to obtain $G_{s-1}$, we add at most two edges between $v$ and $B$ into $G_{s-1}$. Thus  from $G=G_p$, when we supress $z_p,...,z_{q+1}$ to get $G_q$, we add in total at most $p-q-1+1=p-q$ edges from $v$ to $B$, and so $$f_G(v|B)=f_{G_p}(v|B)\le f_{G_q}(v|B)+(p-q).$$
This proves the claim.
\end{cproof}

\begin{claim}
$|W|\ge t-p$.
\end{claim}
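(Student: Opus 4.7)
The plan is to derive a contradiction from the assumption $|W| < t - p$ by leveraging the maximality of $p$. Specifically, I will try to extend the ordered set $A$ by appending all vertices of $W$ in some order, producing a new ordered set $A'$ of size $p + |W| < t$ that still satisfies~\eqref{equation:induction}; this would contradict the choice of $p$ as the largest integer strictly less than $t$ for which such an ordered set exists.

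Concretely, enumerate $W = \{v_1, \ldots, v_{|W|}\}$ arbitrarily, set $z_{p+j} := v_j$ for $1 \le j \le |W|$, and let $A' := A \cup W$, $p' := p + |W|$, $B' := N(z_1) \setminus A' \subseteq B$, with $r_i'$ denoting the count $r_i$ computed with respect to $A'$. For the old indices $i \le p$ we have $r_i' = r_i$, so
\[ f_G(z_i|B') \le f_G(z_i|B) \le p + i + r_i \le p' + i + r_i', \]
which is exactly the required bound. For the new indices $i = p + j$, the required inequality reads
\[ f_G(v_j|B') \le p' + (p+j) + r_{p+j}' = 2p + |W| + j + r_{p+j}'. \]
By Claim~\ref{cl:W-main} and $B' \subseteq B$ we have $f_G(v_j|B') \le f_G(v_j|B) \le |W| + 2p + r_p$, so it suffices to verify $r_p \le j + r_{p+j}'$, which holds because $j \ge 1$ and $r_{p+j}' \ge r_p$ by definition. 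Hence $A'$ satisfies~\eqref{equation:induction}, contradicting the maximality of $p$.

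The main obstacle is the degenerate case $|W| = 0$, where the extension does nothing and Claim~\ref{cl:W-main} yields no useful bound. To handle it, one argues that $W$ must actually be nonempty: in the proof of Lemma~\ref{lem:hypo-inside}, the hypotheses force $\overline{H}$ to have neither a perfect matching nor be hypomatchable, so Proposition~\ref{lemma:edmonds} produces a nontrivial Edmonds--Gallai-type barrier, and a small additional argument shows that after discarding clone vertices this barrier still contains at least one original vertex of $V$, giving $|W| \ge 1$. With this in place, the extension above contradicts the maximality of $p$, proving $|W| \ge t - p$.
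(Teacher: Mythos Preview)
Your main argument is correct and essentially identical to the paper's: assume $p^* := p + |W| < t$, append the vertices of $W$ to $A$ as $z_{p+1},\ldots,z_{p^*}$, and verify~\eqref{equation:induction} for the enlarged ordered set using Claim~\ref{cl:W-main} together with the monotonicity $r_{p+j}' \ge r_p$, contradicting the maximality of $p$. The bookkeeping for both the old indices $i\le p$ and the new indices $i>p$ matches the paper line for line.

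You go slightly beyond the paper in flagging the degenerate case $|W|=0$, where the extension is vacuous and no contradiction with maximality arises; the paper's proof does not treat this case either. Your proposed fix via the Edmonds--Gallai structure behind Proposition~\ref{lemma:edmonds} is only a sketch: even granting that the barrier $W'$ is nonempty, you would still need to rule out $W'\subseteq R_c$ (i.e.\ that the barrier consists entirely of clone vertices), and this does not follow directly from the stated bounds $|W'|\le |R|+t-1$. So this edge case remains a minor loose end shared with the paper rather than a flaw specific to your argument.
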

\begin{cproof}
Suppose for a contradiction that $|W|+p=p^*<t$.  Let $A^*=A\cup W$ where elements in $W$ are enumerated $z_{p+1},...,z_{p^*}$, and let $B^*=N(z_1)\backslash A^*=B\backslash W.$ 
Then 
\begin{itemize}
\item $f_{G}(z_i|B^*) \le f_{G}(z_i|B) \le p+i+r_i\le p^*+i+r_i$ for every $i,2\le i\le p$. 
\item $f_{G}(z_i|B^*) \le f_{G}(z_j|B) \le |W|+2p+r_p\le p^*+i+r_i$ for every $i> p$ (note that $r_i\ge r_p$ since by definition $r$ is a non-decreasing function).
\end{itemize}
Hence \eqref{equation:induction} holds for $p^*$ and $A^*$, contrary to the maximality of $p$. Thus $|W|\ge t-p$.
\end{cproof}

Let $\hat{A}$ be an arbitrary set of $t-p$ vertices in $W$ and enumerate them $z_{p+1},...,z_t$. 
Let $M=A\cup \hat{A}$ and $\MM=B\backslash \hat{A}$. Let $U=M\cup \MM$ and $H=G[U]$. We will apply Lemma \ref{lemma:average1} to $H$ and deduce that $H$ must contain an immersion of $K_t$, which contradicts the assumption that $G$ does not contains an immersion of $K_t$ and so complete the proof of Lemma \ref{lemma:mainmindeg}. We first give some bounds for function $f$ in $H$. Observe that $f_H(z_i|\MM) = f_{G}(z_i|\MM) \le f_{G}(z_i|B)$ for every $i,1\le i\le p$. Note also that $f_{G}(z_1|B) =0$, and $f_{G}(z_i|B) \le 2p+i$ for every $i,1<i\le p$, and by Claim \ref{cl:W-main},
$$ f_{G}(z_i|B)\le |W|+2p+r_p\le t+2p+r_p$$ for every $i,p<i\le t$ (recall that $|W|\le t$). Thus 
\begin{equation} \label{eq:HH}
\left\{ \begin{array}{ll}
f_H(z_i|\MM)\le 2p+i \ \ \ \ \ \ \ \ \ \text{ if } i\le p,\\
f_H(z_i|\MM)\le t+2p+r_p \ \ \ \text{ if } i>p. \end{array} \right. 
\end{equation}
Also note that $|M|=t$, and from \eqref{eq:B}, $$|\MM|\ge |B|-|\hat{A}|\ge 7t-p+r_p-(t-p)=6t+r_p.$$

\begin{claim}\label{cl:HH}
$H$ contains an immersion of $K_t$.
\end{claim}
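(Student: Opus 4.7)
The plan is to apply Lemma~\ref{lemma:average1} to the graph $H$ with the distinguished subset $M$ of size $t$. Since $V(H) = M \cup \MM$ with $M \cap \MM = \emptyset$, the hypothesis of that lemma simplifies to verifying
\[
\sum_{v \in M} f_H(v) \;\le\; \bigl(|\MM| - b\bigr)\, t, \qquad b := \max_{v \in M} f_H(v),
\]
after which Lemma~\ref{lemma:average1} gives a $K_t$-immersion in $H$, and hence in $G$, contradicting the assumption that $G$ is a counterexample.

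For each $v \in M$ I write $f_H(v) = f_H(v \mid M\setminus\{v\}) + f_H(v \mid \MM)$. The estimates (\ref{eq:HH}) control the $\MM$-part, while $f_H(v \mid M\setminus\{v\}) \le t-1$ trivially, and $\sum_{v \in M} f_H(v \mid M\setminus\{v\})$ equals twice the number of non-edges of $H[M]$ and so is at most $t(t-1)$. Together with the lower bound $|\MM| \ge 6t + r_p$, these imply
\[
b \;\le\; (t-1) + (t + 2p + r_p)
\]
and
\[
\sum_{v \in M} f_H(v) \;\le\; \sum_{i=1}^{p}(2p + i) \;+\; (t-p)(t + 2p + r_p) \;+\; t(t-1),
\]
reducing the claim to a polynomial inequality in $t, p, r_p$ subject to $1 \le p < t$ and $0 \le r_p \le p$.

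The main obstacle is the regime where $p$ is close to $t$ and $r_p$ is close to $p$: there the naive bounds above are essentially tight, and one must sharpen them to close the inequality. I plan to use three refinements: (i) the exact equality $f_H(z_1 \mid \MM) = 0$ (since $\MM \subseteq B \subseteq N(z_1)$), in place of the loose estimate $2p+1$ coming from (\ref{eq:HH}) at $i=1$; (ii) $|W| \le t-1$ from Lemma~\ref{lem:hypo-inside}, which via Claim~\ref{cl:W-main} lets us replace $t$ by $|W|$ in the bound for $i > p$; and (iii) the observation that the $r_p - 1$ non-edges of $H[M]$ between $z_1$ and $\{z_2,\ldots,z_p\} \setminus N(z_1)$ are already built into the bound on $b$, so the count of non-edges of $H[M]$ may be tightened accordingly. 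After these refinements the polynomial inequality closes over the full parameter range, giving the required application of Lemma~\ref{lemma:average1} and completing the proof.
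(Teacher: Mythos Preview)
Your plan works when $p \le t/2$ --- this is precisely Case~1 of the paper's proof --- but the polynomial inequality does \emph{not} close when $p$ is close to $t$, and the refinements you list cannot rescue it. Take $p = t-1$ and $r_p = 1$ (so $r_i = 1$ for all $i$). Even using refinement~(i) and the sharpest bound $f_H(z_i|\MM)\le p+i+r_i$ underlying \eqref{eq:HH}, the $\MM$-part of the sum is bounded only by
\[
\sum_{i=2}^{p} (p+i+r_i) \;=\; \sum_{i=2}^{t-1}(t+i) \;=\; \tfrac{3}{2}t^2 + O(t),
\]
and adding the single $(t-p)$ term together with the $M$-part $t(t-1)$ brings your upper bound on $\sum_{v\in M} f_H(v)$ to roughly $\tfrac{5}{2}t^2$. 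On the other side, your estimate $b \le (t-1)+(t+2p+r_p)=4t-2$ combined with $|\MM|\ge 6t+1$ gives $|\MM|-b \ge 2t+3$, so $(|\MM|-b)\,t$ is only guaranteed to be about $2t^2$. The required comparison $\tfrac{5}{2}t^2 \le 2t^2$ fails for every $t\ge 8$; refinements~(ii) and~(iii) each save $O(t)$ and cannot close a $\Theta(t^2)$ deficit. There is no hidden slack to exploit: the bounds in \eqref{eq:HH} encode only the maximality of $p$ and can be essentially tight.

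The paper anticipates exactly this obstruction. In Case~2 ($p>t/2$) it writes that ``$\max_{z_i\in M}f_H(z_i)$ could be very large, and so we cannot apply Lemma~\ref{lemma:average1} directly,'' and then abandons the lemma in favour of an explicit three-stage linking procedure (the subroutine \textsc{Link} and algorithm \textsc{Main}) that constructs the $K_t$ by splitting off length-two paths through $\MM$ in a carefully designed order, verifying at each call that the two current endpoints have combined $\MM$-missing-degree below $|\MM|$. You will need an argument of that kind --- or some genuinely new idea --- for the regime $p>t/2$.
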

\begin{cproof}We consider two cases.

\textbf{Case 1:} $p\le t/2$. 
We have $f_H(z_i|M)\le |M|\le t$ for every $z_i$, and so
\begin{align*}
\sum_{z_i\in M}f_H(z_i)&\le \sum_{z_i\in M}f_H(z_i|M)+\sum_{z_i\in M}f_H(z_i|\MM)\\
&\le t^2+\sum_{1\le i\le p}f_H(z_i|\MM)+\sum_{p< i\le t}f_H(z_i|\MM)\\
&\le t^2+\sum_{i\le p}(2p+i)+\sum_{p< i\le t}(t+2p+r_p)\\
&\le t^2+3p^2+(t-p)(t+3p)\\
&\le 2t^2+2tp \le 3t^2.
\end{align*}
Since $2p\le t$, we have
$$\max_{z_i\in M}f_H(z_i)\le t+\max_{z_i\in M}f_H(z_i|\MM)\le t+(t+2p+r_p)\le 3t+r_p.$$
Note that $|U|=|M|+|\MM|=7t+r_p$. Hence 
$$\sum_{z_i\in M}f_H(z_i)\le 3t^2\le \Big(|U|-t-\max_{z_i\in M}f(z_i)\Big)t.$$
Apply Lemma \ref{lemma:average1} to obtain an immersion of $K_t$ on ${H}$.

\textbf{Case 2:} $p>t/2$. Set $q=|\hat{A}|=t-p$, and so $p>q$. The analysis of this case is more involved. Even though $\sum_{z_i\in M}f_H(z_i)$ is small, $\max_{z_i\in M}f_H(z_i)$ could be very large, and so we cannot apply Lemma \ref{lemma:average1} directly.
However, we can still use a similar argument to that in the proof of Lemma \ref{lemma:average1}.  We present the argument as an algorithm to explicitly find a series of splitting off of edges to yield a $K_t$ immersion by finding edge disjoint paths of length two or four linking the desired pairs of vertices.

Consider an arbitrary loopless multigraph $H'$ with vertex set $U$ and distinct vertices $z_i,z_j\in M$. 
We first define a subroutine called \textsc{Link$(H',z_i,z_j)$}: the algorithm finds $w\in \MM$ such that $z_iw,wz_j\in E(H')$ and then split off the path $z_iwz_j$ to obtain an edge $z_iz_j$.  The algorithm then returns $H'$ after splitting off the path.  Such a $w$ can be found by checking all possible choices for $w$.  In the case that multiple choices exist for $w$, the algorithm arbitrarily chooses one.  

In order to successfully run, the algorithm \textsc{Link$(H',z_i,z_j)$} assumes that the input satisfies:  
\begin{equation}
f_{H'}(z_i|\MM)+f_{H'}(z_j|\MM)< 6t+r_p\le |\MM|,
\label{equation:link}
\end{equation}
Under assumption (\ref{equation:link}), such a $w \in \MM$ must exist and therefore, the algorithm correctly terminates.  Note also that $z_i,z_j$ are adjacent after performing \textsc{Link$(H',z_i,z_j)$}, and that the input $H'$ contains the output graph as an immersion.

We now present the main algorithm to split off edges of $H$ to obtain a complete graph on $M=A\cup \hat{A}$. Set $H':=H$. The algorithm proceeds in stages.  In stage 1, we link all vertices between $\{z_{q+1},...,z_p\}$ and $\hat{A}$.  In stage 2, we link each pair of vertices between $\{z_{1},...,z_q\}$ and $\hat{A}$ with multi-edges of order two. Thus after stages 1 and 2, we obtain two edge-disjoint complete bipartite subgraphs, one between $A$ and $\hat{A}$ and another between $\{z_{1},...,z_q\}$ and $\hat{A}$ (the latter will be used later to obtain a complete graph on $\hat{A}$).  In stage 3, we link all vertices inside $A$, and then obtain a complete graph on $M$.

\medskip
\begin{mdframed}
{\sc Main}($H'$)
\begin{enumerate}

\item {Start with $s:=p$ and repeat the following whenever $s>q$.

\begin{enumerate}

\item[] Start with $i:=p+1$ and repeat the following whenever $i\le t$.

\textbf{\ \ \ \ \  } \textsc{Link$(H',z_s,z_i)$}, $i:=i+1$.

\item[] $s:=s-1$.

\end{enumerate}
}

\item {Start with $s:=q$ and repeat the following whenever $s\ge 1$.

\begin{enumerate}

\item[] Start with $i:=p+1$ and repeat the following whenever $i\le t$.

\textbf{\ \ \ \ \  } \textsc{Link$(H',z_s,z_i)$}, \textsc{Link$(H',z_s,z_i)$}, $i:=i+1$.

\item[] $s:=s-1$.

\end{enumerate}
}

\item {Start with $s:=p$ and repeat the following whenever $s\ge 1$.

\begin{enumerate}

\item[] Start with $i:=s-1$ and repeat the following whenever $i\ge 1$.

\textbf{\ \ \ \ \  } \textsc{Link$(H',z_s,z_i)$}, $i:=i-1$.

\item[] $s:=s-1$.

\end{enumerate}
}
\item Return $H'$.
\end{enumerate}

\end{mdframed}

\medskip

Suppose that we have performed \textsc{Main($H'$)} successfully. The output $H'$ contains two edge-disjoint complete bipartite subgraphs, $H_1$ from $A$ to $\hat{A}$, and $H_2$ from $\{z_{1},...,z_q\}$ to $\hat{A}$, and a complete graph $H_3$ on $A$.
We now show how to obtain from $H_2$ a complete graph $H_4$ on $\hat{A}$. 
Since $|\hat{A}|=q$, by Vizing Theorem, we can color the edges of an imagined complete graph on $\hat{A}$ by $q$ colors $\{1,2,...,q\}$ so that any two incident edges have different color. 
Now for every $z_i,z_j\in \hat{A}$, if the edge $z_iz_j$ in that imagined graph has color $s$, then we split off edges $z_iz_sz_j$ in the complete bipartite graph $H_2$ to get an edge $z_iz_j$, and so obtain a complete graph $H_4$ on $\hat{A}$. Hence $H_1\cup H_3\cup H_4$ is a complete graph on $M$. Thus the output $H'$ contains $K_t$ as an immersion, which implies that $H$ contains $K_t$ as an immersion.

It only remains to show that we can perform \textsc{Main($H'$)} successfully, which is equivalent to verifying that for each call to the subroutine \textsc{Link($H',z_i,z_j$)} we have that \eqref{equation:link} is satisfied.
We omit the subscript $H'$ of $f$ in the rest of this proof.
Observe that after performing \textsc{Link($H',z_i,z_j$)}, $f(z_i|\MM)$ and $f(z_j|\MM)$ each increases by at most 1.

Consider step $(s,i)$ of stage 1.  The vertex $z_s$ has been linked $i-p-1$ times and so from \eqref{eq:HH} we have $f(z_s|\MM)< p+s+i$, and $z_i$ has been linked $p-s-1$ times and so $f(z_i|\MM)< t+3p+r_p-s$. Then
$$f(z_s|\MM)+f(z_i|\MM)< t+4p+i+r_p\le 6t+r_p,$$
and so \eqref{equation:link} holds for every step $(s,i)$ of stage 1.
From  \eqref{eq:HH} and the definition of the algorithm, we have that at the end of stage 1:
\begin{equation}\notag
\begin{array}{ll}
f(z_s|\MM)\le 2p+s\ \ \ \ \ \ \ \ \ \ \ \ \ \ \ \ \ \ \ \ \ \ \ \text{ if } s\le q,\\
f(z_s|\MM)\le (2p+s) + q\ \ \ \ \ \ \ \ \ \ \ \ \ \ \ \ \text{ if } q<s\le p,\\
f(z_i|\MM)\le (t+2p+r_p)+(p-q)  \ \ \ \text{ if } i>p. \end{array}
\end{equation}

Consider step $(s, i)$ of stage 2.  
The vertex $z_s$ has been linked $2(i-p-1)$ times during stage 2 and so
$f(z_s|\MM)\le 2p+s+2q-2=2t+s-2$ (since $t=p+q$), and $z_i$ has been linked $2(q-s-1)$ times.   Thus
$f(z_i|\MM)\le 2t+2p+r_p-2s-2$. It follows that 
$$f(z_s|\MM)+f(z_i|\MM)\le 4t+2p+r_p-s-4\le 6t+r_p-4.$$
We can perform \textsc{Link$(H',z_s,z_i)$} twice. At the end of stage 2, we have 
\begin{equation}\notag
\begin{array}{ll}
f(z_s|\MM)\le (2p+s)+2q\le 2t+s\ \ \ \ \text{ if } s\le q,\\
f(z_s|\MM)\le (2p+s)+ q\le 2t+s\ \ \ \ \ \ \text{ if } q<s\le p. \end{array}
\end{equation}

Consider step $(s,i)$ of stage 3.
The vertex $z_s$ has been linked $(p-s)+(s-i-1)$ times during stage 3 (in which $p-s$ times with $z_{r},s<r\le p$ and $s-i-1$ with $z_{j},i<j\le s$) and so
$f(z_s|\MM)< (2t+s)+p-i$, and $z_i$ has been linked $p-s-1$ times and so
$f(z_i|\MM)<(2t+i)+p-s$. Then 
$$f(z_s|\MM)+f(z_i|\MM)< 4t+2p\le 6t+r_p,$$
and so \eqref{equation:link} holds for every step $(s,i)$ of stage 3.  Claim \ref{cl:HH} now follows.
\end{cproof}
This proves Lemma \ref{lemma:mainmindeg}, and so prove Theorem \ref{theorem:mindeg}.
\end{proof}

%%%%%%%%%%%%%%%%%%%%%%%%%%%%%%%%%%%%%%
%%%%%%%%%%%%%%%%%%%%%%%%%%%%%%%%%%%%%%
\section{Forcing a clique immersion via the chromatic number} \label{section:chromatic}
%%%%%%%%%%%%%%%%%%%%%%%%%%%%%%%%%%%%%%
%%%%%%%%%%%%%%%%%%%%%%%%%%%%%%%%%%%%%%

In this section we shall prove Theorem \ref{theorem:chromatic}. Recall that given $\ell \ge 1$, a graph $G$ is \emph{$\ell$-critical} if the chromatic number of $G$ is $\ell$, and deleting any vertex of $G$ results in a subgraph with chromatic number $\ell-1$. A well-known property of critical graphs is that if $G$ is a graph 
with chromatic number  $\ell$, then $G$ contains an $\ell$-critical subgraph. Let us restate Theorem \ref{theorem:chromatic}.

\begin{theorem}\label{theorem:chromatic2}
Every graph with chromatic number at least $3.54t+4$ contains an immersion of $K_t$.
\end{theorem}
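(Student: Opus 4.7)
I would argue by contradiction and pass to a minimal counterexample. Suppose $G$ has $\chi(G) \ge 3.54t + 4$ and contains no $K_t$ immersion; since an immersion in a subgraph lifts to $G$, we may take $G$ to be $\ell$-critical with $\ell = \chi(G)$. Critical graphs have $\delta(G) \ge \ell - 1$, and they satisfy the stronger Kempe-type property: for every $v \in V(G)$ there is a proper $(\ell - 1)$-coloring of $G - v$ in which $N(v)$ meets all $\ell - 1$ color classes (otherwise one extends the coloring to $v$ and contradicts $\chi(G) = \ell$).

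An easy dichotomy disposes of the high-degree case: if $\delta(G) \ge 7t + 7$, Theorem \ref{theorem:mindeg} immediately gives an immersion of $K_t$. So assume $\ell - 1 \le \delta(G) < 7t + 7$, placing us in the regime where the coloring structure must be exploited. Pick a vertex $v$ of minimum degree and a proper $(\ell - 1)$-coloring $c$ of $G - v$ whose classes $C_1, \dots, C_{\ell - 1}$ each meet $N(v)$, and let $u_i \in C_i \cap N(v)$ be chosen so as to minimize $\sum_i f_G(u_i)$. Form $M = \{v, u_1, \dots, u_{t-1}\}$. All edges $v u_i$ are present, so the only missing edges inside $M$ occur between representatives $u_i, u_j$; but since $u_i, u_j$ sit in distinct color classes, any such missing edge can be absorbed by splitting off a short path through $G[C_i \cup C_j]$, and these paths are automatically edge-disjoint across different pairs $\{i,j\}$ because the bichromatic edge sets are disjoint. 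The plan is then to show that this choice of $M$ satisfies the hypothesis of Lemma \ref{lemma:average1}, which yields the desired $K_t$ immersion; the coefficient $3.54$ would emerge from balancing the resulting missing-degree bound against the term $(n - t - \max_M f)t$ provided by Lemma \ref{lemma:average1}.

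The main obstacle is that in a vertex-critical graph the bichromatic subgraphs $G[C_i \cup C_j]$ need not be connected, so a single Kempe swap does not guarantee a path between two chosen representatives $u_i$ and $u_j$; one has to argue either that the chosen representatives can be placed in a common Kempe component, or that the coloring can be pre-processed (via swaps along other Kempe chains) to force the connectivity on the pairs actually needed. A secondary difficulty is that some representatives may have large missing degree in $G$ as a whole, in which case one would likely switch to applying Lemma \ref{lemma:average2} on a carefully chosen induced subgraph (for instance, the complement of a small "bad" set) rather than Lemma \ref{lemma:average1} on $M$. The final constant is what drops out of optimizing between these two regimes, and I expect this optimization—rather than a cleaner halving $7/2$—to be the reason for the slightly unusual coefficient $3.54$.
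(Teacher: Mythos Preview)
Your outline correctly identifies the overall architecture (pass to a critical subgraph, bound the minimum degree via Theorem \ref{theorem:mindeg}, then exploit Kempe chains in the neighbourhood of a minimum-degree vertex), and you also correctly pinpoint the main obstacle: for arbitrary representatives $u_i\in C_i\cap N(v)$ and $u_j\in C_j\cap N(v)$, the bichromatic subgraph $G[C_i\cup C_j]$ need not connect $u_i$ to $u_j$, so the claimed split-off of a length-two path is not available in general. What is missing is a mechanism to overcome this, and neither of your suggested fixes (``place the representatives in a common Kempe component'' or ``pre-process by swaps'') works as stated: a single swap that connects one pair can disconnect another, and there is no monotone potential to drive this to termination.

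The paper resolves this not by forcing connectivity for all pairs, but by classifying the colours according to how many times they occur in $N(v)$. Choosing the $(\ell-1)$-colouring of $G-v$ to \emph{maximise the number of singletons} (colours appearing exactly once in $N(v)$) guarantees, via the usual Kempe-swap contradiction, that any two singletons are joined by a Kempe chain, and that singleton--doubleton and doubleton--doubleton pairs are joined in one of a small number of controlled patterns. The bulk of the work is then an auxiliary-graph argument on the doubleton colours (an ``odd/even'' edge-colouring, removal of odd triangles, swap operations) that converts these partial Kempe connections into a dense simple graph on roughly $\alpha+\beta$ vertices, to which Lemma \ref{lemma:average2} applies. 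The constant $3.54$ arises precisely from the interplay of the pigeonhole bound $2\alpha+\beta\ge (\ell-1)-|N|/3$ with the quadratic inequality $n^2-3nx+x^2\ge 0$ (root $(3+\sqrt{5})/2\approx 2.618$) coming out of that density analysis; it is not obtained by balancing Lemma \ref{lemma:average1} against a ``bad set'' as you conjecture. Your proposal does not yet contain the singleton/doubleton distinction or any substitute for the odd-triangle machinery, so as written it does not close.
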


\begin{proof}
Assume the theorem is false, and suppose that there exists a graph of chromatic number $\ell \ge 3.54t+4$ but which does not immerse $K_t$. 
Let $G^*$ be an $\ell$-critical subgraph of that graph. Let $v_0$ be a vertex of $G^*$ with minimum degree.  By Theorem \ref{theorem:mindeg}, $d_{G^*}(v_0)\le 7t+6$. 
Let $N=N_{G^*}(v_0)$, and let $G$ be the graph obtained from $G^*$ by deleting $v_0$.  It follows that $G$ does not immerse $K_t$.
The graph $G^*$ is $\ell$-critical, so $G$ has chromatic number $\ell-1$. Furthermore, for any coloring of $G$, $N$ always has at least one vertex of each of the $\ell-1$ colors, otherwise we could color $G^*$ with $\ell-1$ colors.  The proof uses Kempe-chains, introduced by Alfred Kempe in an 1879 attempt to prove that planar graphs are 4-colorable, to build a clique immersion with branch vertices in $N$.  

Given a coloring of $G$, we call a vertex $v\in N$ a \textit{singleton} if $v$ is the unique vertex in $N$ with its color. Two vertices $v,v'\in N$ of the same color form a \textit{doubleton} if they are the only two vertices with a given color in $N$. 
Let $\CC$ be an $\ell - 1$ coloring of $G$ which maximizes the number of singletons.
Let $X=\{x_1,...,x_\alpha\}$ and $Y=\{y_1,y_1',...,y_\beta,y'_\beta\}$ be the sets of singletons and doubletons, respectively, where $x_i$ has color $a_i$ and $y_i,y'_i$ share color $b_i$. 
All other colors appear at least 3 times in $N$. Thus, $\ell - 1$, the number of colors in $N$, is at most $$\alpha +\beta+ \frac{|N|-\alpha-2\beta}{3}=\frac{|N|+2\alpha+\beta}{3}.$$
Since $|N|= d_{G^*}(a)\le 7t+6$, we have
$$3.54t+3\le \ell-1 \le \frac{7t+6+2\alpha+\beta}{3}$$
\begin{equation}
\Longrightarrow\   2\alpha+\beta\ge 2.62t+3   \label{equation:2alpha}.
\end{equation}

Given colors $a,b$, an \textit{$(a,b)$-chain} is a path with vertices colored alternately by colors $a$ and $b$. Clearly, if $\{a,b\}\ne \{a',b'\}$, then any $(a,b)$-chain and $(a',b')$-chain are edge-disjoint. 
The idea is as follows. We first show that there are many chains with endpoints in $X\cup Y$. Since these chains are edge-disjoint, we can split them off to get a dense graph on $X\cup Y$, then apply Lemma \ref{lemma:average2} to obtain a $K_t$ immersion, which leads to the contradiction.

\begin{claim}\label{claim:chains}
The following hold.
\begin{enumerate}[label=(\alph*)]
\item For all pairs of distinct colors $a_i, a_j$, there is an $(a_i,a_j)$-chain from $x_i$ to $x_j$. \label{enumerate:sing}

\item For any colors $a_i,b_j$, there is an $(a_i,b_j)$-chain from $x_i$ to $y_j$, or from $x_i$ to $y'_j$. \label{enumerate:sing-doub}

\item \label{enumerate:doub} For all pairs of distinct colors $b_i,b_j$, one of the following holds:
\begin{enumerate}[label=(\roman*)]
\setcounter{enumi}{2}
\item \label{enumerate:alph1}  there exist two edge-disjoint $(b_i,b_j)$-chains linking $y_i$ to $y_j$ and $y'_i$ to $y'_j$;

\item  \label{enumerate:alph2} there exist two edge-disjoint $(b_i,b_j)$-chains linking $y_i$ to $y'_j$ and $y'_i$ to $y_j$;

\item \label{enumerate:alph3} there exist $(b_i,b_j)$-chains from any of $y_i,y_i'$ to any of $y_j,y_j'$ but they cannot be chosen edge-disjoint.
\end{enumerate}
\end{enumerate}
\end{claim}

\begin{cproof}
For every color $a$, let $V_{a}\subseteq V(G)$ be the set of all vertices of color $a$ in $\CC$.

To prove \ref{enumerate:sing}, suppose that there exist two distinct colors $a_i,a_j$ such that there is no $(a_i,a_j)$-chain from $x_i$ to $x_j$. Then $x_i,x_j$ are disconnected in $G[V_{a_i}\cup V_{a_j}]$.
Let $U$ be the connected component containing $x_i$ in $G[V_{a_i}\cup V_{a_j}]$. We exchange the color of all vertices in $U$ from color $a_i$ to $a_j$ and vice versa and obtain a new coloring $\CC'$ in $G$. Clearly $\CC'$ is a proper coloring in  $G[V_{a_i}\cup V_{a_j}]$, and so is a proper coloring in $G$.
Now both $x_i$ and $x_j$ has color $a_j$, so $\CC'$ has no vertex of color $a_i$, contrary to the fact that $N$ has all colors for every $(\ell-1)$-coloring of $G$. 

To prove \ref{enumerate:sing-doub}, the same argument works. Suppose that there exist two distinct colors $a_i,b_j$ such that there is no $(a_i,b_j)$-chain from $x_i$ to $\{y_j,y_j'\}$. Then $x_i$ is disconnected with $\{y_j,y_j'\}$ in $G[V_{a_i}\cup V_{b_j}]$.
Let $U$ be the connected component containing $x_i$ in $G[V_{a_i}\cup V_{b_j}]$. We exchange the color of all vertices in $U$ from color $a_i$ to $b_j$ and vice versa and obtain a new coloring $\CC'$ in $G$. Then $\CC'$  is a proper coloring in $G$ and has smaller number of colors on $N$ than $\CC$, contrary to the fact that $N$ has all colors for every $(\ell-1)$-coloring of $G$. 

To prove \ref{enumerate:doub}, we first prove that
\begin{enumerate}[label=(\alph*)]\setcounter{enumi}{3}
\item \label{en:d} for every pair of distinct colors $b_i,b_j$, there is a $(b_i,b_j)$-chain from $y_i$ to $y_j$, or from $y_i$ to $y'_j$.
\end{enumerate}

Suppose that there exist two distinct colors $b_i,b_j$ such that there is no $(b_i,b_j)$-chain from $y_i$ to $\{y_j,y_j'\}$. Then $y_i$ is disconnected with $\{y_j,y_j'\}$ in $G[V_{b_i}\cup V_{b_j}]$.
Let $U$ be the connected component containing $y_i$ in $G[V_{y_i}\cup V_{b_j}]$. We exchange the color of all vertices in $U$ from color $b_i$ to $b_j$ and vice versa and obtain a new coloring $\CC'$ in $G$. Then $\CC'$  is a proper coloring in $G$.
If $y_i'\in U$, then $\CC'$  has no vertex of color $b_i$ in $N$, contrary to the fact that $N$ has all colors for every $(\ell-1)$-coloring of $G$. 
If $y_i'\notin U$, then $\CC'$ has exactly one vertex of color $b_i$ in $N$, and so has more singletons than $\CC'$, which contradicts our choice of $\CC$ to maximize the number of singletons.

We now show how \ref{en:d} implies \ref{enumerate:doub}. From \ref{en:d}, every pair of distinct colors $b_i,b_j$, there is a $(b_i,b_j)$-chain from $y_i$ to $\{y_j,y_j'\}$ and another $(b_i,b_j)$-chain from $y_i'$ to $\{y_j,y_j'\}$. 
If one chain go to $y_j$ and another go to $y_j'$, then there are three possibilities.
First, these chains are edge-disjoint and between $y_i,y_j$ and $y_i',y_j'$, then \ref{enumerate:alph1} holds. Second,  these chains are edge-disjoint and between $y_i,y_j'$ and $y_i',y_j$, then \ref{enumerate:alph2} holds. Third, they are not edge-disjoint, then all $\{y_i,y_i',y_j,y_j'\}$ are connected by these two chains, and \ref{enumerate:alph3} holds.
Otherwise, say these chains both go from $y_i,y_i'$ to $y_j$. Then by \ref{en:d}, there is a $(b_i,b_j)$-chain from $y_j'$ to either $y_i$ or $y_i'$. Hence all $\{y_i,y_i',y_j,y_j'\}$ are connected by some $(b_i,b_j)$-chains, and \ref{enumerate:alph3} holds.
\end{cproof}

For every pair of colors, we fix a subgraph based on the appropriate outcome of Claim \ref{claim:chains}.  For every $i, j$, $1 \le i < j \le \alpha$, fix $C_a(i,j)$ to be an $(a_i, a_j)$-chain from $x_i$ to $x_j$.  For all $i, j$, $1 \le i \le \alpha$, $1 \le j \le \beta$, fix $C_b(i,j)$ to be an $(a_i, b_j)$-chain from $x_i$ to either $y_j$ or $y_j'$.  Let $i, j$ be such that $1 \le i < j \le \beta$; one of \ref{enumerate:alph1} - \ref{enumerate:alph3} holds for the colors $b_i$ and $b_j$.  If either \ref{enumerate:alph1} or \ref{enumerate:alph2} holds, fix $C_c(i, j)$ to be the subgraph consisting of two edge disjoint $(b_i, b_j)$-chains linking $\{y_i, y_i'\}$ and $\{y_j, y_j'\}$.  If \ref{enumerate:alph3} holds, fix $C(i,j)$ to be an edge minimal subgraph containing $(b_i, b_j)$-chains linking each of $y_i, y_i'$ to each of $y_j, y_j'$.
For $i, j$, $1 \le i < j \le \beta$, we say that $C_c(i,j)$ has one of 3 \emph{types}, namely \ref{enumerate:alph1}, \ref{enumerate:alph2}, or \ref{enumerate:alph3}, depending on which outcome of \ref{enumerate:doub} holds.  Note that $C_a(i,j)$, $C_b(i,j)$, and $C_c(i,j)$ are all pairwise edge disjoint.  

If we split off all the possible edge disjoint paths contained in subgraphs from the previous paragraph, it will not necessarily be the case that we will have sufficient edges on $X\cup Y$ to apply Lemma \ref{lemma:average2}.  To get around this problem, we focus instead on the vertex set $X \cup \{y_1, \dots, y_\beta\}$.    The subgraphs $C_c(i,j)$ of type \ref{enumerate:alph1} or type \ref{enumerate:alph3} contain a path which can be split off to yield the edge $y_iy_j$.  Moreover, if we flip the labels $y_i$ and $y_i'$, every $C_c(i,j)$ subgraph of type \ref{enumerate:alph2} becomes a $C_c(i,j)$ subgraph of type \ref{enumerate:alph1} (and vice versa).  Thus, we can increase the density of the resulting graph on $X \cup \{y_1, \dots, y_\beta\}$ by flipping the appropriate pairs of labeles $y_i$, $y_i'$.  

Unfortunately, this greedy approach will still not yield enough edges on $X \cup \{y_1, \dots, y_\beta\}$ to apply Lemma \ref{lemma:average2}.  To further increase the final edge density, we will group together multiple $C_c(i,j)$ subgraphs of type \ref{enumerate:alph2} to split off paths and add further edges to the set $\{y_1, \dots, y_\beta\}$.  The remainder of the argument carefully orders how the subgraphs are grouped together so that when we split them off and get as dense a subgraph as possible on the vertex set $X \cup \{y_1, \dots, y_\beta\}$.
%set of type However, since chains of type \ref{enumerate:alph3} are flexible, 
%we can split off all $(b_i,b_j)$-chains of type \ref{enumerate:alph3} to obtain any one edge from the set $\{y_iy_j, y_i'y_j, y_iy_j', y_i'y_j'\}$, and then apply Lemma \ref{lemma:average2} to the graph on $X\cup \{y_1,...,y_\beta\}$, which is suppose to be dense. However, if too many $(a_i,b_j)$-chains are from $x_i$ to $y'_j$, then the graph on $X\cup \{y_1,...,y_\beta\}$ may not be dense enough, and we still fail. Fortunately, since the role of $y_j$ and $y'_j$ are equivalent, we can switch their labels to avoid the bad case that many $(a_i,b_j)$-chains are from $x_i$ to $y'_j$, and indeed we succeed with this strategy. To implement this, we first split off all chains except of type \ref{enumerate:alph3}, then analyze the new graphs and switch the labels of some pairs $\{y_j,y_j'\}$, and finally split off chains of 
%type \ref{enumerate:alph3} (thank to their flexibility) to obtain a dense graph on $X\cup \{y_1,...,y_\beta\}$ to apply Lemma \ref{lemma:average2}.

We begin by defining the subgraphs $G_1$, $G_2$ and the auxiliary graph $H$ as follows.  Split off all paths of the form $C_a(i, j)$, $C_b(i, j)$, and the two edge disjoint $\{y_iy_i'\}-\{y_j,y_j'\}$-paths contained in the subgraphs $C_c(i,j)$ of type \ref{enumerate:alph1} and \ref{enumerate:alph2}.   Let $G_1$ the graph with vertex set $V(G)$ and edge set the set of all new edges arising from splitting off these paths. Let $G_2$ be the subgraph of $G$ with vertex set $V(G)$ edge set the union of $E(C_c(i,j))$ for all subgraphs $C_c(i,j)$ of type \ref{enumerate:alph3}. Observe that $G_1\cup G_2$ is an immersion of $G$ and therefore does not immerse $K_t$. 
Clearly, $G_1[X]$ is a complete graph obtained from splitting off all the subgraphs $C_a(i,j)$, and so
\begin{equation}\label{claim:alpha}
\alpha=|X|\le t-1.
\end{equation}

We define an auxiliary graph $H$ by replacing each pair of vertices $y_i,y_i'$ with a single vertex $z_i$, and we color edges of incident with $z_i$ to describe the behavior of $y_i,y_i'$. Precisely, let $H$ be a graph
with vertex set $X \cup Z$ where $Z=\{z_1,...,z_\beta\}$ and edge set 
\begin{align*}
E(H) = \{x_iz_j: 1\le i&\ \le \alpha,  1 \le j \le \beta\}\cup \\
\cup \{z_iz_j:&\ 1 \le i < j \le \beta \text{ and $C_c(i,j)$ is not of type \ref{enumerate:alph3}}\}.
\end{align*}
The edges of $H$ are improperly colored by two colors \textit{odd, even} as follows:
\begin{itemize}
\item $x_iz_j$ is even if $x_iy_j\in E(G_1)$, and is odd if $x_iy'_j\in E(G_1)$.
\item $z_iz_j$ is even if $y_iy_j,y_i'y_j'\in E(G_1)$, and is odd if $y_iy_j',y_i'y_j\in E(G_1)$.
\end{itemize}

To perform a \emph{swap} at a vertex $z_i$, we exchange the colors of all edges incident with $z_i$ in $H$; a swap is equivalent to switching the labels of $y_i$ and $y_i'$ in $G_1\cup G_2$. 
To \emph{swap} a set $S\subseteq Z$, we swap vertices in $S$ sequentially in an arbitrarily chosen order.  One can easily show that to swap a set $S$ is equivalent to switching the color of every edges between $S$ and $V(H)\backslash S$.

A triangle in $H$ is \textit{odd} if it has odd number of odd-edges.  A key property of odd-triangles is that an odd-triangle is still odd after any swap. 
Each odd-triangle either has 3 vertices in $Z$ or exactly two vertices in $Z$ -- call them type 1 and type 2 odd-triangles, respectively. Given a type 1 odd-triangle $z_iz_jz_k$, the set of edges in $G_1$ with endpoints in $\{y_i,y_i',y_j,y'_j,y_k,y'_k\}$ are called the \emph{corresponding edges} of $z_iz_jz_k$. Similarly, given a type 2 odd-triangle $x_iz_jz_k$, the set of edges in $G_1$ with endpoints in $\{x_i,y_j,y_j',y_k,y'_k\}$ are called the \emph{corresponding edges} of $x_iz_jz_k$. Clearly, the set of corresponding edges of two edge-disjoint odd-triangles are disjoint. In Figure \ref{fig}, we describe all possibilities (up to permutation of indices) of the set of corresponding edges of a type 1 odd-triangle (upper figures) and of a type 2 odd-triangle (lower figures). 

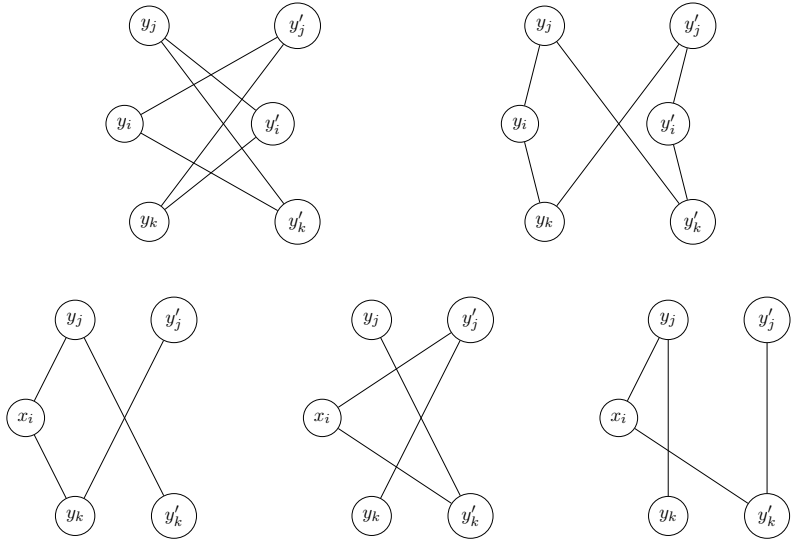
\begin{figure}[h]
\begin{center}
\scalebox{0.65}{
\begin{tikzpicture}
    \node[shape=circle,draw=black] (A) at (0,0) {$y_k$};
    \node[shape=circle,draw=black] (B) at (0,4) {$y_j$};
    \node[shape=circle,draw=black] (C) at (-0.5,2) {$y_i$};
    \node[shape=circle,draw=black] (D) at (3,0) {$y_k'$};
    \node[shape=circle,draw=black] (E) at (3,4) {$y_j'$};
    \node[shape=circle,draw=black] (F) at (2.5,2) {$y_i'$} ;

    \path [-] (A) edge node[left] {} (E);
    \path [-](A) edge node[left] {} (F);
    \path [-](B) edge node[left] {} (D);
    \path [-](B) edge node[left] {} (F);
    \path [-](C) edge node[right] {} (E);
    \path [-](C) edge node[left] {} (D);  

\begin{scope}[shift={(8cm,0cm)}], 
    \node[shape=circle,draw=black] (A) at (0,0) {$y_k$};
    \node[shape=circle,draw=black] (B) at (0,4) {$y_j$};
    \node[shape=circle,draw=black] (C) at (-0.5,2) {$y_i$};
    \node[shape=circle,draw=black] (D) at (3,0) {$y_k'$};
    \node[shape=circle,draw=black] (E) at (3,4) {$y_j'$};
    \node[shape=circle,draw=black] (F) at (2.5,2) {$y_i'$} ;

    \path [-] (A) edge node[left] {} (E);
    \path [-](E) edge node[left] {} (F);
    \path [-](B) edge node[left] {} (D);
    \path [-](D) edge node[left] {} (F);
    \path [-](C) edge node[right] {} (A);
    \path [-](C) edge node[left] {} (B);
\end{scope} 

\begin{scope}[shift={(-1.5cm,-6cm)}],
 
    \node[shape=circle,draw=black] (A) at (0,0) {$y_k$};
    \node[shape=circle,draw=black] (B) at (0,4) {$y_j$};
    \node[shape=circle,draw=black] (C) at (-1,2) {$x_i$};
    \node[shape=circle,draw=black] (D) at (2,0) {$y_k'$};
    \node[shape=circle,draw=black] (E) at (2,4) {$y_j'$};
    \path [-] (A) edge node[left] {} (E);
    \path [-](B) edge node[left] {} (D);
    \path [-](C) edge node[right] {} (B);
    \path [-](C) edge node[left] {} (A); 

\begin{scope}[shift={(6cm,0cm)}], 
    \node[shape=circle,draw=black] (A) at (0,0) {$y_k$};
    \node[shape=circle,draw=black] (B) at (0,4) {$y_j$};
    \node[shape=circle,draw=black] (C) at (-1,2) {$x_i$};
    \node[shape=circle,draw=black] (D) at (2,0) {$y_k'$};
    \node[shape=circle,draw=black] (E) at (2,4) {$y_j'$};
    \path [-] (A) edge node[left] {} (E);
    \path [-](B) edge node[left] {} (D);
    \path [-](C) edge node[right] {} (D);
    \path [-](C) edge node[left] {} (E); 
\end{scope}

\begin{scope}[shift={(12cm,0cm)}], 
    \node[shape=circle,draw=black] (A) at (0,0) {$y_k$};
    \node[shape=circle,draw=black] (B) at (0,4) {$y_j$};
    \node[shape=circle,draw=black] (C) at (-1,2) {$x_i$};
    \node[shape=circle,draw=black] (D) at (2,0) {$y_k'$};
    \node[shape=circle,draw=black] (E) at (2,4) {$y_j'$};
    \path [-] (A) edge node[left] {} (B);
    \path [-](E) edge node[left] {} (D);
    \path [-](C) edge node[right] {} (B);
    \path [-](C) edge node[left] {} (D); 
\end{scope}
\end{scope}

\end{tikzpicture}
}
\end{center}
\caption{Possibilities of corresponding edges of odd-triangles.}\label{fig}
\end{figure}

Looking at Figure \ref{fig}, we can easily verify the following.
\begin{enumerate}[label=(\Alph*)]
\item \label{enumerate:Alph1} If $z_iz_jz_k$ is an odd-triangle of type 1, we can split off its corresponding edges to obtain edges $y_iy_j,y_jy_k,y_ky_i$. 

\item \label{enumerate:Alph3} If $x_iz_jz_k$ is an odd-triangle of type 2, we can split off its corresponding edges to obtain the edge $y_jy_k$.
\item \label{enumerate:Alph2} 
If $x_iz_jz_k$ is an odd-triangle of type 2, we can alternatively split off its corresponding edges to obtain two edges from the set$\{x_iy_j,y_jy_k,y_kx_i\}$ (exactly which two edges depends on which case from Figure \ref{fig} we find ourselves in).
\end{enumerate}

Let $H_1$ be a graph obtained from $H$ by removing an (inclusion-wise) maximal set $\mathcal{T}_1$ of pairwise edge-disjoint odd-triangles of type 1, and 
let $H_2$ be a graph obtained from $H_1$ by removing an (inclusion-wise) maximal set $\mathcal{T}_2$ of pairwise edge-disjoint odd-triangles of type 2. In the following claims, we employ the assumption that $G$ does not contain a $K_t$-immersion to bound the degree of vertices in $H_1[Z]$ and $H_2$.

\begin{claim}\label{claim:odd1}
$d_{H_1[Z]}(z)< t$ for every $z\in Z$.
\end{claim}
\begin{cproof}
Suppose for a contradiction that there exists $z\in Z$ such that $d_{H_1[Z]}(z)\ge t$. 
Let $M_o$ ($M_e$) the sets of vertices adjacent to $z$ in $H_1[Z]$ by an odd-edge (by an even-edge, respectively). Then $|M_o|+|M_e|=d_{H_1[Z]}(z)\ge t$.   
Every edge $uv$ in $H_1[Z]$ with $u,v\in M_o$ ($u,v\in M_e$, respectively) must be even; otherwise, $uvz$ is an odd-triangle of type 1, contradicting the maximality assumption on ${\cal T} _1$.
Similarly, every edge $uv$ in $H_1[Z]$ with $u\in M_o$ and $v\in M_e$ must be odd. 

We now swap $M_o$, and then the new graph $H_1[M_o\cup M_e]$ contains only even-edges.
Let $M=\{y_i:z_i\in M_o\cup M_e\}$. Then $|M|=|M_o|+|M_e|\ge t$.
For every odd-triangle in $\mathcal{T}_1$, we split off corresponding edges in $G_1$ by method \ref{enumerate:Alph1} to get $y_iy_j,y_jy_k,y_ky_i$. Then for any distinct vertices $y_i,y_j\in M$, we have
\begin{itemize}
\item if $z_iz_j\in H_1$, then $z_iz_j$ is even, and hence $y_iy_j\in G_1$. 
\item if $z_iz_j\in H\backslash H_1$, then $z_iz_j$ belongs to some odd-triangle in $\mathcal{T}_1$, and we showed above that we can obtain $y_iy_j$ by splitting off edges of $G_1$ by method \ref{enumerate:Alph1}.
\item if $z_iz_j\notin H$, then $C_c(i,j)$ is of type \ref{enumerate:alph3} and so there exists a $y_i - y_j$ path in $C_c(i,j)$ which can be split off to yield the edge $y_iy_j$. 
\end{itemize} 
We end up with a complete graph on $M$, and so conclude that $G_1\cup G_2$ contains $K_t$ as an immersion (since $|M|\ge t$), which is a contradiction.
\end{cproof}

\begin{claim}\label{claim:odd2}
$d_{H_2}(x)<t$ for every $x\in X$. 
%In other words, for any $x\in X$, there are at least $(\beta-t)/2$ odd-triangles in $\mathcal{T}_2$ containing $x$.
\end{claim}

\begin{cproof}
The proof is quite similar to the proof of Claim \ref{claim:odd1}.  We suppose that there exists $x\in X$ such that $d_{H_2}(x)\ge t$. Note that $X$ is a stable set in $H$, and so all neighbors of $x$ in $H_2$ are in $Z$.
Let $M_o$ ($M_e$) be the set of vertices adjacent to $x$ in $H_2$ by an odd-edge (by an even-edge, respectively). Then $M_o\cup M_e\subseteq Z$ and $|M_o|+|M_e|\ge t$.   
Every edge $uv$ in $H_2$ with $u,v\in M_o$ ($u,v\in M_e$, respectively) must be even; otherwise, $uvx$ is an odd-triangle of type 2, contradicting the maximality assumption of ${\cal T} _2$.
Similarly, every edge $uv$ in $H_2$ with $u\in M_o$ and $v\in M_e$ must be odd. 

We now swap $M_o$.  The new graph $H_2[M_o\cup M_e]$ contains only even-edges.
Let $M=\{y_i:z_i\in M_o\cup M_e\}$. Then $|M|=|M_o|+|M_e|\ge t$.
For every odd-triangle in $\mathcal{T}_1$, we split off corresponding edges in $G_1$ by method \ref{enumerate:Alph1} to get $y_iy_j,y_jy_k,y_ky_i$.
For every odd-triangle in $\mathcal{T}_2$, we split off corresponding edges in $G_1$ by method \ref{enumerate:Alph3} to get $y_iy_j$.
Then for any distinct vertices $y_i,y_j\in M$, we have
\begin{itemize}
\item if $z_iz_j\in H_2$, then $z_iz_j$ is even, and hence $y_iy_j\in G_1$. 
\item if $z_iz_j\in H_1 - E(H_2)$, then $z_iz_j$ belongs to some odd-triangle in $\mathcal{T}_2$, and we showed above that we can obtain $y_iy_j$ by splitting off edges of $G_1$ by method \ref{enumerate:Alph3}.
\item if $z_iz_j\in H - E(H_1)$, then $z_iz_j$ belongs to some odd-triangle in $\mathcal{T}_1$, and we showed above that we can obtain $y_iy_j$ by splitting off edges of $G_1$ by method \ref{enumerate:Alph1}.
\item if $z_iz_j\notin H$, we split off a $y_i - y_j$ path in $C_c(i,j)$ in $G_2$ to obtain $y_iy_j$.
\end{itemize} 
We end up with a complete on $M$, and so $G_1\cup G_2$ contains $K_t$ as an immersion (since $|M|\ge t$), which is a contradiction.
\end{cproof}

The next claim guarantees that at least half of edges in $H_2$ are even. 

\begin{claim}\label{claim:bigswap}
There exists a subset $S$ of vertices such that after swapping $S$ in $H_2$, the number of even-edges in $H_2$ is at least the number of odd-edges.
\end{claim}
\begin{cproof}
We first show that there exits a sequence of swaps resulting in the number of even-edges in $H_2[Z]$ being at least the number of odd-edges $H_2[Z]$.
If there is $z\in Z$ such that $z$ is incident with more odd-edges than even-edges in $H_2[Z]$, we swap $z$, then repeat. 
The process will halt since the number of even-edges in $H_2[Z]$ strictly increases after each swap. 
When the process halts, every $z\in Z$ is incident with at least as many even-edges as with odd-edges in $H_2[Z]$, and so in total, the number of even-edges in $H_2[Z]$ at least the number of odd-edges $H_2[Z]$.

If the number of even-edges from $Z$ to $X$ in $H_2$ is less than the number of odd-edges from $Z$ to $X$ in $H_2$, we swap the set $Z$.  After the switch,  the number of even-edges from $Z$ to $X$ in $H_2$ is at least the number of odd-edges from $Z$ to $X$ in $H_2$.  Moreover, edges in $H_2[Z]$ are not affected by swapping $Z$.  Finally, note that there is no edge in $H_2[X]$.

Thus at the end of this series of swaps, the number of even-edges in $H_2$ is at least the number of odd-edges in $H_2$, proving the claim.
\end{cproof}

By Claim \ref{claim:bigswap}, we may assume that at least half of edges in $H_2$ are even. 
We now split off edges in $G_1\cup G_2$ to obtain a dense graph on $X\cup \{y_1,...,y_\beta\}$ as follows. 
For every odd-triangle in $\mathcal{T}_1$, we split off its corresponding edges in $G_1$ by method \ref{enumerate:Alph1}. For every odd-triangle in $\mathcal{T}_2$, we split off its corresponding edges in $G_1$ by method \ref{enumerate:Alph2}, which implies that we obtain two of three edges in the set $\{x_iy_j, y_jy_k, y_kx_i\}$. 
For every pairs $b_i,b_j$ in case \ref{enumerate:alph3}, we also split off a path in $C_c(i,j)$  to get the edge $y_iy_j$ as guaranteed by \ref{enumerate:alph3}.
We denote by $\hat G$ the induced subgraph of the new graph on $X\cup \{y_1,...,y_\beta\}$. Note that $\hat G$ is an immersion of $G_1\cup G_2$, and so does not contain an immersion of $K_t$.

We will show that $\hat G$ is dense, specifically by counting the number of non-edges in $\hat G$.  We first observe that by construction, $\hat G[X]$ is complete.  
Thus, all non-edges in $\hat G$ arise from odd-edges of $H$ for which the corresponding edge of $\hat G$ cannot be reconstructed through odd-triangles.

Observe that $H=(H - E(H_1)) \cup (H_1 - E(H_2)) \cup H_2$.  We consider each of the subgraphs $H - E(H_1)$,  $H_1 - E(H_2)$, and $H_2$ and how they can contribute non-edges to $\hat G$ separately.
\begin{itemize}
\item Each odd-triangle $z_iz_jz_k\in \mathcal{T}_1$ contributes zero missing edge to $\hat G$ since we obtain $y_iy_j,y_jy_k,y_ky_i$ by method \ref{enumerate:Alph1}. Hence $H - E(H_1)$ (the union of odd-triangles in $\mathcal{T}_1$) contributes zero missing edge to  $\hat G$.

\item Each odd-triangle $x_iz_jz_k\in \mathcal{T}_2$ contributes exactly one missing edge to $\hat G$ since we obtain two edges among $x_iy_j,y_jy_k,y_kx_i$ by method \ref{enumerate:Alph2}. Hence $H_1 - E( H_2)$ (the union of odd-triangles in $\mathcal{T}_2$) contributes $|\mathcal{T}_2|$ missing edge to  $\hat G$.

\item Each odd-edge (even-edge) in $H_2$ contributes exactly one (zero, respectively) missing edge to $\hat G$. Hence $H_2$ contributes at most $|E(H_2)|/2$ missing edges to  $\hat G$ by Claim \ref{claim:bigswap}.
\end{itemize} 
We conclude that the number of missing edges in $\hat G$ is at most $|E(H_2)|/2+ |\mathcal{T}_2|$. We next give an explicit bound for the number of missing edges in $\hat G$.

\begin{claim}\label{cl:df}
The number of missing edges in $\hat G$ is at most $(\alpha\beta+\alpha t+\beta t)/4$.
\end{claim}
\begin{cproof}

Let $p= |E(H_2)|/2$ and $q=|\mathcal{T}_2|$. Then the number of missing edges in $\hat G$ is at most $p+q$.
By claim \ref{claim:odd1}, $H_1[Z]$ has $\beta$ vertices and minimum degree less than $t$, and so $E(H_1[Z])< \beta t/2$. Hence
\begin{align*}
2p+3q &\le |E(H_2)|+|E(H_1 - E(H_2))|\\
&= |E(H_1)|\\
&= |X||Z|+\big|E(H_1[Z])\big|\\
&\le \alpha\beta+\beta t/2. 
\end{align*}

Claim \ref{claim:odd2} states that every $x\in X$ is adjacent to at most $t$ vertices of $Z$ in $H_2$, and so is adjacent to at most $|Z|-t$ vertices of $Z$ in $H_1 - E(
H_2)$.
This implies that for every $x\in X$, there are at least $(|Z|-t)/2=(\beta-t)/2$ odd-triangles in $\mathcal{T}_2$ containing $x$ (since $H_1 - E(H_2)$ is the union of odd-triangles in $\mathcal{T}_2$). This means that $$q=|\mathcal{T}_2|\ge |X|(\beta-t)/2= \alpha(\beta-t)/2.$$
Hence 
$$p+q = \frac{(2p+3q)-q}{2}\le \frac{(\alpha\beta+\beta t/2)-\alpha(\beta-t)/2}{2}=\frac{\alpha\beta+\alpha t+\beta t}{4}.$$
Hence the number of missing edges in $\hat G$ is at most $(\alpha\beta+\alpha t+\beta t)/4$.
\end{cproof}

We next show that if $|V(\hat G)|=\alpha+\beta$ is large, then we can apply Lemma \ref{lemma:average2} to yield a contradiction that $\hat G$ contains an immersion of $K_t$.  Hence $\alpha+\beta$ is small, which contradicts \eqref{equation:2alpha}, and the proof of Theorem \ref{theorem:chromatic} is complete.

\begin{claim}\label{claim:alphabeta}
$\alpha+\beta<2.62(t+1)$.
\end{claim}
\begin{cproof}
Let $n=|V(\hat G)|=\alpha+\beta$ and suppose for a contradiction that $n\ge2.62(t+1)$. Let $\ff=\frac{1}{n}\sum_{v\in \hat G}f_{\hat G}(v)$. Then $n\ff/2$ is the number of missing edges in $\hat G$, and so by Claim \ref{cl:df} we have
\begin{equation}
2\ff\le\frac{\alpha\beta+\alpha t+\beta t}{n}=\frac{\alpha\beta}{n} +t 
\label{equation:ff}.
\end{equation}

If $\ff<n/4$, then by Lemma \ref{lemma:average2}, $\hat G$ contains an immersion of $K_{t'}$, where $t'=\lfloor n/2\rfloor\ge t$.
Hence $\hat G$ contains an immersion of $K_t$, a contradiction. 

Otherwise, since $\alpha\beta \le (\alpha+\beta)^2/4=n^2/4$, we have $2\ff<n/4 +t<n$. Thus by applying Lemma \ref{lemma:average2}, $\hat G$ contains an immersion of $K_{t'}$, where $t'=\lfloor n-2\ff \rfloor> n-2\ff-1$. 
We conclude that $n-2\ff-1<t$ since $\hat G$ does not contain $K_t$ as an immersion.

Recall that by \eqref{claim:alpha}, we have that $\alpha<n/2$. For every $x$ such that $\alpha<x<n/2$, we have 
$$\alpha\beta =\alpha(n-\alpha) <x(n-x).$$
Since $\alpha<t+1<n/2$, we can choose $x:=t+1$, and so
\begin{align*}
(n-2\ff-1)-t &\ge n-\bigg(\frac{\alpha\beta}{n}+t\bigg)-t-1\\
&\ge n-\frac{\alpha\beta}{n}-2x\\
&> \frac{n^2-3nx+x^2}{n}.
\end{align*}
The assumption of the claim is that $n\ge 2.62x$, and hence $n^2-3nx+x^2\ge 0$ (by solving the quadratic equation). This gives $n-2\ff-1\ge t$, which contradicts what we obtained above that $n-2\ff-1<t$. This prove the claim.
\end{cproof}
Combining Claim \ref{claim:alphabeta} with \eqref{claim:alpha}, we obtain $2\alpha+\beta< 3.62t+3$, which contradicts \eqref{equation:2alpha}. This completes the proof of Theorem \ref{theorem:chromatic}.
\end{proof}

%%%%%%%%%%%%%%%%%%%%%%%%%%%%%%%%%%%%%%%%%%
%%%%%%%%%%%%%%%%%%%%%%%%%%%%%%%%%%%%%%%%%%

\section{Immersion in graphs with no stable set of size 3} \label{section:2.5}

We begin by reformulating Theorem \ref{theorem:2.5}.

\begin{theorem}\label{theorem:2.5.2}
For all $t \ge 1$, every graph $G$ with at least $5t$ vertices and no stable set of size three has a strong immersion of $K_{2t}$.
\end{theorem}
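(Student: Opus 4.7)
The plan is to take a maximum matching of $\overline{G}$ (triangle-free by hypothesis) with pairs $(u_1,v_1),\ldots,(u_m,v_m)$ and unmatched set $U$ of size $k$, so $n = 2m+k$. Three standard consequences of maximality and triangle-freeness will drive the proof: $U$ is a clique in $G$, because a non-edge of $G$ inside $U$ would be an augmenting $\overline{G}$-edge; after relabelling inside each pair if necessary, $v_i$ is $G$-adjacent to every vertex of $U$, since otherwise $w\,u_i\,v_i\,w'$ would be an augmenting path in $\overline{G}$ for some $w,w'\in U$; and whenever $v_iv_j\notin E(G)$, triangle-freeness on $\{u_i,v_i,v_j\}$ and $\{u_j,v_i,v_j\}$ forces $u_iv_j,u_jv_i\in E(G)$.

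If $k\ge 2t$, the clique $U$ already contains $K_{2t}$ and gives the desired strong immersion. Otherwise $k<2t$, and $n\ge 5t$ yields $m\ge\lceil(5t-k)/2\rceil$, so at least $2t-k$ pairs are available and at least $\lceil(t+k)/2\rceil$ remain unused. I would set the branch vertex set $M^*:=U\cup\{v_1,\ldots,v_{2t-k}\}$ of size $2t$; by the three facts above, the only possible non-edges inside $M^*$ are pairs $v_iv_j$. For each such non-edge with $u_iu_j\in E(G)$, the length-three path $v_i\,u_j\,u_i\,v_j$ lies entirely in the Steiner set $V(G)\setminus M^*$ and realizes the missing edge; a direct case check shows that, as $(i,j)$ ranges over these \emph{good} non-edges, these paths are pairwise edge-disjoint.

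The crux of the argument is handling the \emph{bad} non-edges $v_iv_j$ with $u_iu_j\notin E(G)$, for which $\{u_i,v_i,v_j,u_j\}$ induces a $4$-cycle in $\overline{G}$. Triangle-freeness of $\overline{G}$ combined with $u_iu_j\in E(\overline{G})$ forces $u_i$ and $u_j$ to have no common $\overline{G}$-neighbour, and hence at least $n-f_G(u_i)-f_G(u_j)$ common $G$-neighbours; this is large once we have reduced to the non-trivial case $\omega(G)<2t$, which gives $f_G(u_i),f_G(u_j)<2t$ since $N_{\overline{G}}(u_i)$ is a clique in $G$. For each bad $v_iv_j$ I would route a length-four path $v_i\,u_j\,w_{ij}\,u_i\,v_j$ with $w_{ij}$ chosen among the Steiner common $G$-neighbours of $u_i$ and $u_j$. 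The main obstacle, and the technical heart of the proof, is to choose all $w_{ij}$ simultaneously so that the bad length-four paths and the good length-three paths are pairwise edge-disjoint; this amounts to a Hall-/Vizing-type scheduling on the bipartite incidence between bad non-edges and Steiner vertices, played off against the slack $|V(G)\setminus M^*|=n-2t\ge 3t$. I expect the most delicate point to be the local constraint at each $v_i$: when many bad non-edges meet at $v_i$, we need enough distinct $w_{ij}$ on $u_i$'s side that also avoid every $u_k$ used by a good path $v_i\,u_k\,u_i\,v_k$ through $u_i$.
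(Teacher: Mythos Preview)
Your approach is genuinely different from the paper's. The paper argues by induction on $t$: in a minimal counterexample on $5t+5$ vertices it locates an induced $C_5$, applies the induction hypothesis to the remaining $5t$ vertices to get a strong $K_{2t}$-immersion with branch set $M$, and then promotes this to $K_{2t+2}$ by adjoining two non-adjacent vertices $a_1,a_3$ of the $C_5$. The routing problem there is local---only edges from $a_1,a_3$ to $M$ need to be realised, and the paths go through the other three $C_5$-vertices together with a small supply of ``spare'' vertices in $V\setminus M$, whose existence is certified by a short pigeonhole argument on enlarged non-neighbour sets $A_i'$. Your direct construction via a maximum matching in $\overline{G}$ is a natural alternative, and your three structural consequences (that $U$ is a clique, that one side of each pair sees all of $U$, and that $u_iv_j,u_jv_i\in E(G)$ whenever $v_iv_j\notin E(G)$) are all correct; the good-pair routing and its edge-disjointness check are fine as well.

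The gap is in the bad-pair routing, and it is more serious than a scheduling technicality. Your count $n-f_G(u_i)-f_G(u_j)$ lower-bounds the number of \emph{all} common $G$-neighbours of $u_i$ and $u_j$, but $w_{ij}$ must lie in the Steiner set $S=V(G)\setminus M^*$. Restricting to $S$ costs essentially $|M^*|=2t$ (the only guaranteed saving is that $v_i,v_j\in M^*$ are non-neighbours of $u_i,u_j$ respectively), so the bound on Steiner common neighbours is roughly $n-2t-f_G(u_i)-f_G(u_j)$, which at $n=5t$ with $f_G(u_i),f_G(u_j)$ near $2t-1$ is negative. Thus, as written, you have not established that even a \emph{single} valid $w_{ij}$ exists for a given bad pair, let alone that the full system of choices can be made edge-disjointly via a Hall-type argument against the slack $|S|\ge 3t$. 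The approach is not obviously doomed---one could try to choose the $2t-k$ pairs entering $M^*$ so as to push many $\overline{G}$-neighbours of the relevant $u_i$'s into $M^*$, or to allow longer detours---but none of this is in the proposal, and without it the bad case does not go through.
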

\begin{proof}
Assume that the theorem is false, and pick a counterexample $G$ which minimizes $|V(G)| + |E(G)|$.  Assume that $G$ has at least $5t+5$ vertices and no strong immersion of $K_{2t+2}$.  Since every graph on at least 5 vertices with no independent set of size three contains an edge, we may assume that $t \ge 1$.  By minimality, we may assume that $n = |V(G)| = 5t+5$.  Furthermore, as $G-e$ does not contain a strong immersion of $K_{2t+2}$ for all edges $e$, by minimality it follows that deleting any edge results in a stable set of size three.  All index arithmetic in the following proof is done mod 5.

\begin{claim}
$G$ contains an induced cycle of length $5$.
\end{claim}
\begin{cproof}
If $G$ were the disjoint union of cliques, since it contains no stable set of size three then it must be a disjoint union of at most two cliques. One of the two cliques has at least $\lceil n/2\rceil \ge 2t+2$ vertices, and so $G$ contains a strong $K_{2t+2}$-immersion, a contradiction.

Thus $G$ is not a disjoint union of cliques, and there exist two adjacent vertices $a_1,a_2$ such that $N(a_1)\ne N(a_2)$. Without loss of generality, we may suppose that $N(a_2)\backslash N(a_1)\ne \emptyset$ and let $a_3\in N_G(a_2)\backslash N_G(a_1)$. 
This gives $a_1a_3\notin E$ and $a_2a_3\in E$. 
Observe that there is $a_4$ with $a_1a_4,a_2a_4\notin E$; otherwise, we can remove the edge $a_1a_2$ without creating any stable set of size three, which contradicts the minimality of $G$.
By the same argument, there is $a_5$ with $a_2a_5,a_3a_5\notin E$. Note that $G$ does not contain any stable set of size three and $a_1a_4,a_1a_3\notin E$, and so $a_3a_4\in E$. Similarly, $a_1a_5,a_4a_5\in E$. Thus $a_1a_2a_3a_4a_5$ forms an induced cycle of length 5 in $G$.
\end{cproof}

Let $C=\{a_i:1\le i\le 5\}$ induce a cycle of length five, and let $U=V\backslash C$. Then $G[U]$ has $n-5 =  5t$ vertices and $G[U]$ contains no stable set of size three. By minimality of $G$, $G[U]$ contains a strong immersion of $K_{2t}$ with with some set of branch vertices $M$. 
Let $Q=U\backslash M$, and for every $i, 1\le i\le 5$, let $M_i$ be the set of vertices in $M$ not adjacent to $a_i$.

In the following claim, we show that if there are two large disjoint sets $X_1,X_3$ in $Q$ with some desired property, then for every $v\in M$, we can split off paths $a_1xv$ or $a_1xa_iv$ (of length 2 or 3) with $x\in X_1,i\in\{2,4,5\}$ to get the edge $a_1v$, and similarly to get the edge $a_3v$, and so get a strong clique immersion of size $2t+2$ on $M\cup \{a_1,a_3\}$, which is a contradiction.

\begin{claim}\label{claim:2.5X}
Suppose that there are disjoint sets $X_1,X_3\subseteq Q$ satisfying
\begin{enumerate}[label=(\roman*)]

\item \label{enum:2.5.2} $|X_1|\ge |M_1|$, and $|X_3|\ge |M_3|$; 
\item \label{enum:2.5.1} 
for every $x\in X_1$, we have $xa_1,xa_5\in E$ and either $xa_2\in E$ or $xa_4\in E$; and
\item \label{enum:2.5.3} for every $x\in X_3$, we have $xa_3,xa_4\in E$ and either $xa_2\in E$ or $xa_5\in E$.

\end{enumerate}
Then $G$ has a strong immersion of $K_{2t}$, where the set of branch vertices is $M\cup \{a_1,a_3\}$.
\end{claim}
\begin{cproof}
Let $E_1$ be the set of edges in $G$ from $C$ to $M_1\cup X_1$.
We wish to split off paths in $E_1$ to obtain edges from $a_1$ to every vertex in $M_1$. The process of splitting off is as follows.

\begin{itemize}
\item Arbitrarily pair each vertex $v\in M_1$ with a vertex $x_v\in X_1$ such that $x_v\ne x_{v'}$ for every $v\ne v'$ (such a choice of $x_v$ exists by \ref{enum:2.5.2}).

\item For every $v\in M_1$, note that $va_4\in E$ (otherwise, $\{a_1,v,a_4\}$ is a stable set of size three) and either $va_2\in E$ or $va_5\in E$ (otherwise, $\{a_2,v,a_5\}$ is a stable set of size three).
If $va_5\in E$, we split off the path $va_5x_va_1$ to get an edge $va_1$.
\item Otherwise, $va_4\in E$ and $va_2\in E$. By \ref{enum:2.5.1}, either $x_va_2\in E$ or $x_va_4\in E$.
If $x_va_2\in E$, we split off the path $va_2x_va_1$ to get the edge $va_1$. Otherwise, we split off the path $va_4x_va_1$ to get the edge $va_1$.
\end{itemize}
Note that in this process we only use edges of $E_1$ and at the end we obtain all edges from $a_1$ to $M_1$, and so obtain all edges from $a_1$ to $M$.
Let $E_3$ be the set of edges in $G$ from $C$ to $M_1\cup X_3$. Note that $E_1\cap E_3=\emptyset$, and hence we can split off paths in $E_3$ in the same manner to obtain all edges from $a_3$ to $M_3$, and so obtain all edges from $a_3$ to $M$.

By minimality, we can split off edges of $G[U]$ to obtain a $K_{2t}$ on $M$. Note that $E_1$, $E_3$ and $E(G[U])$ are pairwise disjoint, so we never split off an edge twice. By splitting off $a_1a_2,a_2a_3$, we obtain $a_1a_3$, and hence obtain a complete graph on $M\cup\{a_1,a_3\}$. 
Clearly, all split off paths are internally edge-disjoint from $M\cup\{a_1,a_3\}$. 
Hence $G$ contains a strong immersion of $K_{2t+2}$, where the set of branch vertices is $M\cup\{a_1,a_3\}$, a contradiction.
\end{cproof}

To reach the contradiction, it only remains to show that such sets $X_1,X_3$ exist up to shifting indices. 
For every $i,1\le i\le 5$, let $A_i$ be the set of non-neighbors of $a_i$ in $G[U]$. Note that $A_i\cup\{a_{i-2},a_{i+2}\}$ is a clique, and so  $|A_i|\le 2t$ since $G$ does not contain any $K_{2t+2}$-immersion.
Also note that since $G$ contains no stable set of size three, and hence $A_i\cap A_{i+2}=\emptyset$ for every $i$. 

As discussed above, we wish to find sets $X_1,X_3$ satisfying Claim \ref{claim:2.5X}. One might hope to choose $X_1:=A_3\cap Q$ and $X_3:=A_1\cap Q$; these sets indeed satisfy \ref{enum:2.5.1} and \ref{enum:2.5.3} but may fail to meet \ref{enum:2.5.2} in the case either $|A_1|$ or $|A_3|$ is small. This problem can be avoided by enlarging $A_1$ and $A_3$. This leads to the following definition of $A_1'...,A_5'$.

Let $A_1',...,A_5'$ be subsets of $U$ such that $\sum_{i=1}^{5}|A'_i|$ is as large as possible, and
\begin{equation}
\label{eq:2.5}
\left\{\begin{array}{l}
A_i\subseteq A_i',\\
|A_i'|\le 2t,\\
A_i'\cap A'_{i+2}=\emptyset,
\end{array}\right.
\ \ \ \forall 1\le i\le 5.
\end{equation}

\begin{claim}\label{claim:sum2t}
There exists $i$ such that $|A_i'|=|A_{i+2}'|=2t$.
\end{claim}

\begin{cproof}
Assume the claim is false.  
Then there exists $j$ such that $|A'_{j}|,|A'_{j+1}|,|A'_{j+2}|<2t$.
Without loss of generality, assume $|A_1'|,|A'_2|,|A'_3|<2t$.  

For every $i$, let $B_{i,i+1}=A'_i\cap A'_{i+1}$, and $D_i=A'_i\backslash (A_{i-1}\cup A'_{i+1})$. Then all 10 sets $D_i, B_{i,i+1}$ are pairwise disjoint, and $A'_i=B_{i-1,i}\cup D_i \cup B_{i,i+1}$. 
Note also that $D_i\cap A'_{i+1}=\emptyset$ and $D_i\cap A'_{i-1}=\emptyset$ for every $i$.

Suppose that there exists $v\in U$ such that $v\notin \bigcup_{i=1}^{5}A'_i$. Then $A_1'\cup\{v\},A_2',...,A_5'$  satisfy (\ref{eq:2.5}), while the sum of their cardinalities is larger, a contradiction. 
This gives $\bigcup_{i=1}^{5}A'_i=U$. In other words, 
$$\Big(\bigcup_{i=1}^{5}D_i\Big)\cup \Big(\bigcup_{i=1}^{5}B_{i,i+1}\Big)=U.$$ Since all these sets are pairwise disjoint, we have
\begin{equation}\label{eq:2.5.2}
\sum_{i=1}^{5}|D_i|+\sum_{i=1}^{5}|B_{i,i+1}|=|U|\ge 5t.
\end{equation}

Observe that if $|A'_i|<2t$ and there exists $v\in D_{i-1}\cup D_{i+1}$, then $(A'_i\cup\{v\})\cap A'_{i+2}=\emptyset$ and  $(A'_i\cup\{v\})\cap A'_{i-2}=\emptyset$. Hence
$A'_i\cup\{v\},A'_{i+1},...,A'_{i+4}$ satisfy (\ref{eq:2.5}), violating our choice to maximize the sum of their cardinalities. Hence if $|A_i'|<2t$, then $D_{i-1}=\emptyset$ and $D_{i+1}=\emptyset$.

Recall the assumption that $|A'_1|,|A'_2|,|A'_3|<2t$.
By the observation in the previous paragraph, we have $D_j=\emptyset$ for every $j$. Hence  from (\ref{eq:2.5.2}) we have $\sum_{i=1}^{5}|B_{i,i+1}|\ge 5t$. Also note that $|A_i'|=|B_{i,i-1}|+|D_i|+|B_{i,i+1}|=|B_{i,i-1}|+|B_{i,i+1}|$ for every $i$. This gives
$$10t\le2\sum_{i=1}^{5}|B_{i,i+1}|=\sum_{i=1}^{5}|A'_i|<10t,$$
a contradiction. This proves the claim. 
\end{cproof} 

Without loss of generality, we may suppose that $|A'_1|=|A'_3|=2t$.
\begin{claim}\label{cl:2.5s}
Let $X_1=A_3'\cap Q$ and $X_3=A_1'\cap Q$. Then $X_1,X_3$ satisfy conditions in Claim \ref{claim:2.5X}. 
\end{claim}
\begin{cproof}
We first show that \ref{enum:2.5.3} holds for $X_3$.
Recall that $A_3\subseteq A_3'$, $A_4\subseteq A_4'$ and 
$A_1'\cap (A_3'\cup A_4')=\emptyset$. Then $A_1'\cap (A_3\cup A_4)=\emptyset$, and so $X_3\cap (A_3\cup A_4)=\emptyset$ since $X_3\subseteq A_1'$. 
Hence for every $v\in X_3$, we have $va_3\in E$ and $va_4\in E$ (otherwise, $G$ contains a stable set of size three). 
Note that $B_{5,1}\cap B_{1,2}=\emptyset$. Hence for every $v\in X_3$, either $v\notin B_{5,1}$ or $v\notin B_{1,2}$. 
If $v\notin B_{5,1}$ then $v\notin A_5'$ (since $v\in A'_1$), and so $v \notin A_5$. This means that $v$ is adjacent to $a_5$. Otherwise, $v\notin B_{1,2}$, and by the same argument, $v$ is adjacent to $a_2$.
Hence, \ref{enum:2.5.3} holds for $X_3$. 

We now show that \ref{enum:2.5.2} holds for $X_3$. Let $M_1'=A_1'\cap M$ and $M_3'=A_3'\cap M$. Then by (\ref{eq:2.5}), we have $M_1\subseteq M_1'$, $M_3\subseteq M_3'$, and $M_1'\cap M_3'=\emptyset$. Besides, $X_3\cap M_1'\subseteq Q\cap M=\emptyset$ and $$X_3\cup M_1'=(A_1'\cap Q)\cup(A_1'\cap M)=A_1'\cap U=A_1'.$$
This gives $|X_3|+|M_1'|=|A_1'|=2t$, and so
$$|X_3|=2t-|M_1'|= |M|-|M_1'|=|M\backslash M_1'|\ge |M_3'|\ge |M_3|.$$
Hence \ref{enum:2.5.2} holds for $X_3$. 

By the same arguments, \ref{enum:2.5.2} and \ref{enum:2.5.1} hold for $X_1$. This proves the claim.
\end{cproof}
Claims \ref{claim:2.5X} and \ref{cl:2.5s} complete the proof of Theorem \ref{theorem:2.5}.
\end{proof}

\end{document}